\newtheorem{proposition}{Proposition}
\DeclareMathOperator{\sign}{sgn}
\DeclareMathOperator{\sss}{s}
\DeclareMathOperator{\ccc}{c}
\newcommand{\abg}{{\alpha,\beta,\gamma}}
\begin{document}

\preprint{APS/123-QED}

\title{Cutting and shuffling a hemisphere:\\non-orthogonal axes}

\author{Thomas F. Lynn}
%\email{thomas.lynn@u.northwestern.edu}
\affiliation{Department of Engineering Sciences and Applied Mathematics,\\ Northwestern University, Evanston, Illinois 60208, USA}

\author{Lachlan D. Smith}
\affiliation{Department of Chemical and Biological Engineering,\\ Northwestern University, Evanston, Illinois 60208, USA}
\altaffiliation[Now at the ]{School of Mathematics and Statistics, The University of Sydney, Sydney, NSW 2006, Australia}

\author{Julio M. Ottino}
\affiliation{Department of Chemical and Biological Engineering, Department of Mechanical Engineering,\\ Northwestern University, Evanston, Illinois 60208, USA}
\affiliation{The Northwestern Institute on Complex Systems (NICO),\\ Northwestern University, Evanston, Illinois 60208, USA}

\author{Paul B. Umbanhowar}
\affiliation{Department of Mechanical Engineering,\\ Northwestern University, Evanston, Illinois 60208, USA}

\author{Richard M. Lueptow}
\email{r-lueptow@northwestern.edu}
\affiliation{Department of Mechanical Engineering, Department of Chemical and Biological Engineering,\\ Northwestern University, Evanston, Illinois 60208, USA}
\affiliation{The Northwestern Institute on Complex Systems (NICO),\\ Northwestern University, Evanston, Illinois 60208, USA}

\date{\today}

\begin{abstract}
We examine the dynamics of cutting-and-shuffling a hemispherical shell driven by alternate rotation about two horizontal axes using the framework of piecewise isometry (PWI) theory. Previous restrictions on how the domain is cut-and-shuffled are relaxed to allow for non-orthogonal rotation axes, adding a new degree of freedom to the PWI. A new computational method for efficiently executing the cutting-and-shuffling using parallel processing allows for extensive parameter sweeps and investigations of mixing protocols that produce a low degree of mixing. Non-orthogonal rotation axes break some of the symmetries that produce poor mixing with orthogonal axes and increase the overall degree of mixing on average. Arnold tongues arising from rational ratios of rotation angles and their intersections, as in the orthogonal axes case, are responsible for many protocols with low degrees of mixing in the non-orthogonal-axes parameter space. Arnold tongue intersections along a fundamental symmetry plane of the system reveal a new and unexpected class of protocols whose dynamics are periodic, with exceptional sets forming polygonal tilings of the hemispherical shell.
\end{abstract}

%\pacs{Valid PACS appear here}% PACS, the Physics and Astronomy
% Classification Scheme.
%\keywords{piecewise isometry, cutting-and-shuffling, granular mixing}%Use showkeys class option if keyword
% display desired
\maketitle

% \tableofcontents

%%%%%%%%%%%%%%%%%%%%%%%%%%%%%%%%%%%%%%%%%%%%%%%%%%%%%%%%%%%%%%%%%%%%%%%%%%%%%%%%%%%%%
%%%%%%%%%%%%%%%%%%%%%%%%%%%%%     INTRODUCTION     %%%%%%%%%%%%%%%%%%%%%%%%%%%%%%%%%%
%%%%%%%%%%%%%%%%%%%%%%%%%%%%%%%%%%%%%%%%%%%%%%%%%%%%%%%%%%%%%%%%%%%%%%%%%%%%%%%%%%%%%

\section{Introduction} \label{sec:introduction}

%\textbf{Impact factors: \\ SIAM Journal On Applied Dynamical Systems 1.486 \\ Nonlinearity 1.926 \\ Chaos 2.415 \\ International Journal Of Bifurcation And Chaos 1.501}

Mixing of fluids by diffusion, chaotic advection, and turbulence has been well studied \cite{Wiggins2004, Ottino1988a}.
% The mixing of granular materials, on the other hand, has been explored to a much lesser extent despite the common need in industry for uniform mixtures of granular materials (e.g., pharmaceutical manufacturing and food processing). Many studies of granular materials examine 2D or quasi--2D systems or aim to design devices for creating uniform mixtures \cite{Lemieux2007, Doucet2008, Alizadeh2013}. Few studies have focused on the fundamental mechanisms of mixing within granular systems.
%
Cutting-and-shuffling is a mixing mechanism that is far less understood, but is particularly relevant to systems with flow discontinuities, such as granular materials \cite{Sturman2012, Juarez2010, Juarez2012, Park2017, Smith2017, Umbanhowar2013}, valved fluid flow \cite{Jones1988, Smith2016a}, thrust faults in geology \cite{Boyer1982, Bell1983, Butler1982}, and, of course, the typical example of mixing a deck of cards \cite{Golomb1961, Aldous1986,  Trefethen2000}. 
In one dimension, cutting-and-shuffling is described mathematically by interval exchange transforms (IETs) \cite{Katok1980, Hmili2010, Keane1977, Masur1982, Avila2007, Krotter2012, Novak2009, Yu2016, Wang2018, Keane1975, Veech1978, Viana2006} which naturally extend to higher dimensions under the framework of \textit{piecewise isometries} (PWIs). PWIs, which cut an object into pieces and spatially rearrange them to form the original shape, can produce complex dynamics despite their relative simplicity \cite{Sturman2008, Ashwin1997a, Scott2001,  Goetz2000, Ashwin2002b, Goetz2001, Goetz2004, Fu2008, Kahng2009}. There are several, though somewhat similar, definitions of PWIs \cite{Ashwin2002b, Goetz2001, Goetz2004, Fu2008, Kahng2009, Park2017}. A piecewise isometry (PWI) \(M: S \to S\) is a map on a domain \(S\) such that, for some partition of \(S\) into a finite number \(N\) of closed \footnote{Most definitions \cite{Ashwin2002b, Goetz2001, Goetz2004, Fu2008, Kahng2009}, treat atoms as open sets and PWI actions are undefined (instead of multi-valued) on atom boundaries.}, mutually disjoint (up to their boundaries) partition elements \(\{P_i\}_{i = 1}^N\) (termed \textit{atoms}), the action of \(M\) is a Euclidean isometry (length preserving, e.g. rotation, translation, reflection) on each \(P_i\). A PWI is invertible if the mapped atoms, \(\{MP_i\}_{i = 1}^N\), are also mutually disjoint (again, up to their boundaries). Overlapping atom boundaries are treated as members of both adjacent atoms, resulting in a map that is multi-valued on atom boundaries and introduces complications that ultimately have no bearing on the measurements in this paper. A PWI is orientation preserving if there are no reflections.

%In order to better understand the fundamental role of mixing via cutting-and-shuffling in a granular material system, 
We investigate a specific family of invertible, orientation preserving PWIs on a hemispherical shell \cite{Juarez2010, Juarez2012, Sturman2008, Park2016, Park2017, Smith2017, Christov2014, Meier2007}. The hemispherical shell has become a prototypical PWI for its relation to a half-filled spherical tumbler of granular particles rotated sequentially about two different horizontal axes (called the Bi-axial Spherical Tumbler, or BST). The stripped down version of this system is a PWI on a hemispherical shell.
%In the PWI approximation, we consider the limit of an infinitesimal flowing layer thickness 
%making the equator a periodic boundary \cite{Sturman2008}.

Previous studies of mixing by PWIs on hemispheres \cite{Sturman2008, Juarez2010, Christov2014, Meier2007} or hemispherical shells \cite{Sturman2008, Park2016, Park2017, Smith2017} focused almost exclusively on the case where the two rotation axes are orthogonal with the single apparent exception of Juarez et al.\ \cite{Juarez2012} where a general mixing metric for non-orthogonal axis protocols was investigated using this same PWI. An investigation of a related system for a non-hemispherical spherical cap (i.e., less-than-half-filled hemisphere), which induces shear along the axial direction (\(\partial \bm{u} / \partial \bm{a}\)) and is therefore no longer a PWI, was recently carried out by Smith et al.\ \cite{Smith2017d}.

This paper uses the PWI formulation to examine mixing on a hemispherical shell when the restriction of orthogonal axes is relaxed, as described in Section~\ref{sec:pwi-def} (the PWI studied here can be expanded to a full sphere, as shown in Appendix~\ref{sec:extended-pwi}).
First, we describe a highly efficient approach for computing the ``exceptional set'' associated with PWIs in Section~\ref{sec:tildeE} and determine its areal coverage, which is correlated with the degree of mixing \cite{Park2017}. Second, we explore symmetry-breaking and other effects that occur when rotations of the hemispherical shell occur about non-orthogonal axes, instead of orthogonal axes, in Section~\ref{sec:non-orthog-E}. Finally, in Section~\ref{sec:phase_space} we examine the position of resonant structures (protocols with minimal coverage) within the non-orthogonal parameter space as the angle between axes changes. Some resonances have polygonal non-mixing regions and some are polygonal tilings of the hemispherical shell. Section~\ref{sec:conclusions} presents our conclusions.

\section{PWI mapping for non-orthogonal axes} \label{sec:pwi-def}

\begin{figure}
  \includegraphics[width = 0.45\textwidth]{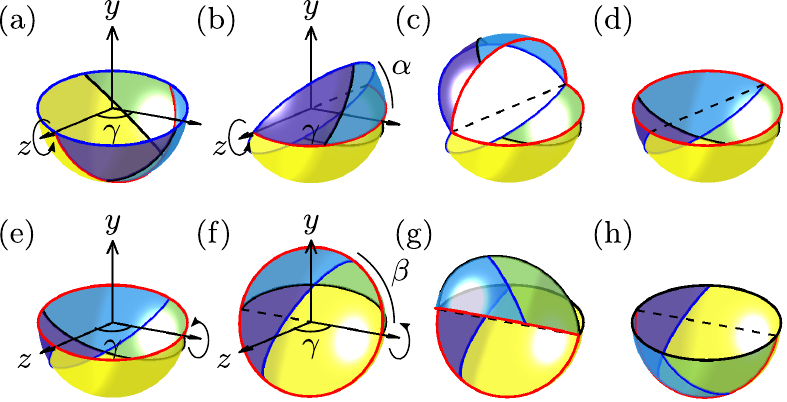}
  \caption{Demonstration of the rotations that define the PWI, here \(\alpha = 57^{\circ}\), \(\beta = 57^{\circ}\), and \(\gamma = 120^{\circ}\). (a) The initial condition of the hemispherical shell (HS). (b) Rotation about the first axis (\(z\)-axis) by \(\alpha\).
  (c) Separation of the portion above the equator and beginning of the rotation to reconstruct the HS.
  (d) Re-formed HS.
  (e-h) Repeat of (a-d) about the second horizontal axis.}
  \label{fig:pwi-demo}
\end{figure}

The procedure defining the PWI on a hemispherical shell (HS) as well as the coordinate system used in this paper is shown schematically in Fig.~\ref{fig:pwi-demo}. Begin by rotating the hemispherical shell about the first horizontal axis by angle \(\alpha\) [Fig.~\ref{fig:pwi-demo}(a) to (b)], rotating the section that passes above the equator by an additional \(180^\circ\) [Fig.~\ref{fig:pwi-demo}(c) and (d)], which provides the cutting action. Next, rotate about the second axis by angle \(\beta\) [Fig.~\ref{fig:pwi-demo}(e) to (f)], again rotating the section that passes above the equator by an additional \(180^\circ\) [Fig.~\ref{fig:pwi-demo}(g) and (h)]. The angle between the rotation axes is \(\gamma\). Thus, we consider the PWI mapping on a hemispherical shell (HS), \(M_\abg: S \to S\), which maps the lower hemispherical unit shell, \(S = \{(x,y,z): x^2 + y^2 + z^2 = 1, y \le 0\}\), to itself.
%\begin{enumerate}
% \item Rotate the HS in Fig.~\ref{fig:pwi-demo}(a) counterclockwise about the first horizontal axis (\(z\)-axis) by angle \(\alpha\) shown in Fig.~\ref{fig:pwi-demo}(b). \label{item:rot1}
% %
% \item Cut the shell at the equator (red curve) and rotate the portion of the shell above the equator \((y>0)\) by \(180^{\circ}\) about the first horizontal axis as in Fig.~\ref{fig:pwi-demo}(c), so that \(S\) is reconstructed as shown in Fig.~\ref{fig:pwi-demo}(d). \label{item:join1}
% %
% \item Rotate the HS in Fig.~\ref{fig:pwi-demo}(e) counterclockwise about the second horizontal axis, which is separated by the angle \(\gamma\) from the first horizontal axis, by angle \(\beta\) shown in Fig.~\ref{fig:pwi-demo}(f). \label{item:rot2}
% %
% \item Cut the shell at the equator (black curve) and rotate the portion of the shell above the equator \((y>0)\) by \(180^{\circ}\) about the second horizontal axis as in Fig.~\ref{fig:pwi-demo}(g), so that \(S\) is reconstructed as shown in Fig.~\ref{fig:pwi-demo}(h). \label{item:cut2}
%\end{enumerate}
Note that the procedure here differs from some previous definitions \cite{Park2016, Park2017, Smith2017} in that both rotations are counterclockwise (this merely results in a left-right mirroring of the PWI compared to other work). More importantly, the rotation axes are not necessarily orthogonal to one another, as in the degree of mixing study by Juarez et al.\ \cite{Juarez2012}.

Together, the two rotations shown in Fig.~\ref{fig:pwi-demo} make up the PWI mapping \( M_\abg\); the ordered triple of control parameters \((\abg)\) is termed the mapping \textit{protocol}. The mapping can be written as the composition of the two modular rotations about horizontal axes \(\bm{a}_1\) and \(\bm{a}_2\) (applied right to left),
\begin{equation}
  M_\abg = \tilde{M}_\beta^{\bm{a}_2} \tilde{M}_\alpha^{\bm{a}_1}, \label{eq:mapping}
\end{equation}
where \(\tilde{M}^{a}_\theta\) represents a rotation about axis \(a\) by angle \(\theta\) with the condition that points crossing the equator are rotated additionally by \(180^{\circ}\) to reconstruct the HS, as shown schematically in Fig.~\ref{fig:pwi-demo}. Here, axes \(\bm{a}_1\) and \(\bm{a}_2\) lie in the equatorial \(xz\)-plane with angle \(\gamma\) between them.

The physical description outlined in Fig.~\ref{fig:pwi-demo} helps to visualize the cutting and shuffling of the HS and relate it to the physical system, but the composition of these actions alone defines the PWI mapping. Figure~\ref{fig:pwi_action} shows the PWI \(M_{57^{\circ},57^{\circ},120^{\circ}}\) viewed from below (along the negative \(y\)-axis) as an example. The hemispherical surface is split into four \textit{atoms}, the closed regions labeled \(P_1, P_2, P_3, P_4 \) in Fig.~\ref{fig:pwi_action}(a), and rearranged to reconstruct the HS as shown in Fig.~\ref{fig:pwi_action}(b). Thus, the combination of step-wise cuts and rotations in Fig.~\ref{fig:pwi-demo} is formally equivalent to the rearrangement of \(P_1, P_2, P_3, P_4 \) between Fig.~\ref{fig:pwi_action}(a) and Fig.~\ref{fig:pwi_action}(b) as a single action.

%In this way, each atom undergoes a separate isometry:
%
%\begin{align}
%P_1 &: R^{\bm{a}_2}_{\beta} R^{\bm{a}_1}_{\alpha},\\
%P_2 &: R^{\bm{a}_2}_{\beta} R^{\bm{a}_1}_{\alpha + \pi},\\
%P_3 &: R^{\bm{a}_2}_{\beta + \pi} R^{\bm{a}_1}_{\alpha},\\
%P_4 &: R^{\bm{a}_2}_{\beta + \pi} R^{\bm{a}_1}_{\alpha + \pi},
%\end{align}
%where \(R^{a}_\theta\) is rotation about axis \(a\) by \(\theta\).

The domain is split along curved cutting lines (great circle arcs) which represent discontinuities in the map, shown in Fig.~\ref{fig:pwi_action}(a) as the red arc \(\mathcal{D}_1\)  and the two black arcs that make up \(\mathcal{D}_2\). The atoms \(P_1,P_2,P_3,P_4\) are rearranged and combined along the cutting lines of the inverse map, shown in Fig.~\ref{fig:pwi_action}(b) as the blue and red arcs. Note that the black cutting lines \(\mathcal{D}_2\) in Fig.~\ref{fig:pwi_action}(a) become the equatorial edge of the HS, which is called \(\partial S\), in Fig.~\ref{fig:pwi_action}(b), and the blue equatorial edge in Fig.~\ref{fig:pwi_action}(a) becomes the blue cutting lines in Fig.~\ref{fig:pwi_action}(b). The union of cutting lines \(\mathcal{D}_1\) and \(\mathcal{D}_2\) is the set of discontinuities \(\mathcal{D}\), also called the unstable set \cite{Scott2003}, which is formally defined as the union of all intersections of the closed partition elements of \(S\), \(P_i\), such that \(\mathcal{D} = \bigcup_{i\neq j} P_i \cap P_j\) \cite{Park2017} \footnote{It is sometimes convenient to include the edge of the HS, \(\partial S\), with \(\mathcal{D}\), but this is not necessary when multiple iterations of the PWI are considered since \(\partial S = M_\abg \mathcal{D}_2\).}. 

\begin{figure}
  \includegraphics[width = 0.5\textwidth]{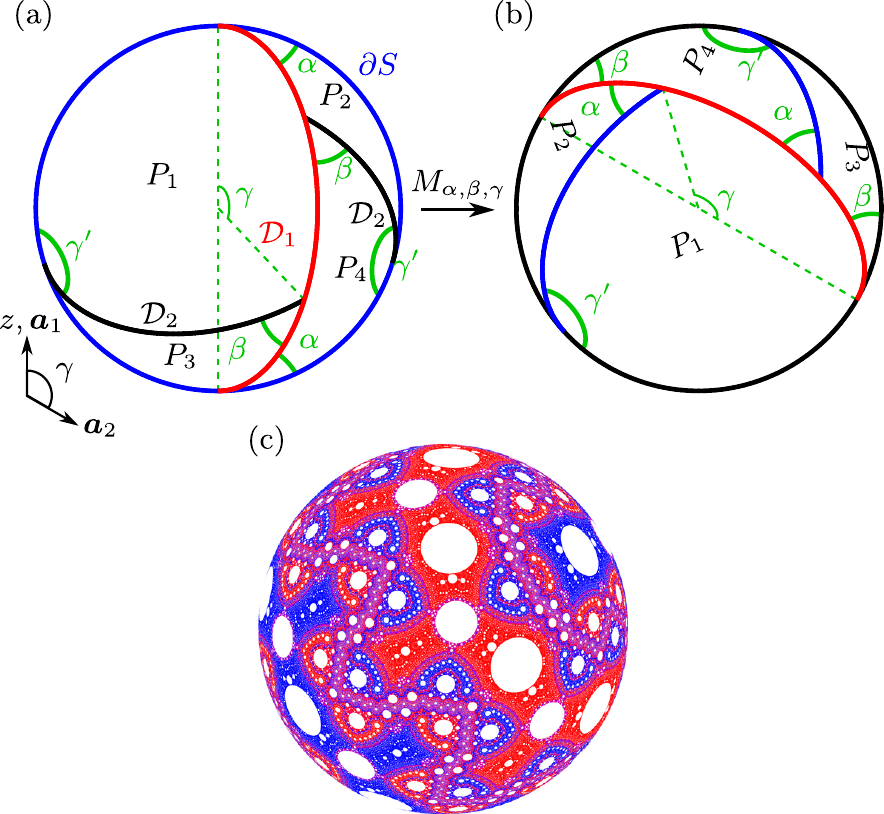}
  \caption{The PWI mapping \(M_{57^{\circ},57^{\circ},120^{\circ}}\) (shown from the negative \(y\)-axis) (a) cuts the domain into four atoms, \(P_1, P_2, P_3, P_4 \), and (b) rearranges them according to the rotation procedure in Sec. \ref{sec:pwi-def}. Cutting lines \(\mathcal{D}_1\) and \(\mathcal{D}_2\) are the red and black arcs in (a). Angles \(\alpha, \beta, \gamma\) specify the size and orientation of the atoms. Angle \(\gamma'\) is the spherical angle between cutting planes.
  (c) The union of cutting lines (red and blue) when \(M_\abg\) is applied 20,000 times.
  }
  \label{fig:pwi_action}
\end{figure}

It is more difficult to visualize the angles \(\alpha\), \(\beta\), and \(\gamma\) in Fig.~\ref{fig:pwi_action} than in Fig.~\ref{fig:pwi-demo}, but these angles are still present despite being distorted by the view from the bottom of the hemispherical shell. For example, \(\beta\) is the angle between the black cutting line \(\mathcal{D}_2\) and the red cutting line \(\mathcal{D}_1\) that form two edges of atoms \(P_3\) and \(P_4\). The area of each atom (a unit hemisphere having a total area of \(2\pi\)) is the sum of the angles in the spherical triangle that specifies the atom minus \(\pi\) such that the surface area \(A_i\) of \(P_i\) is

\begin{align}
A_1 &= \pi - \alpha - \beta + \gamma', \label{eq:area_p1}\\
A_2 &= \pi + \alpha - \beta - \gamma', \label{eq:area_p2}\\
A_3 &= \pi - \alpha + \beta - \gamma', \label{eq:area_p3}\\
A_4 &= - \pi + \alpha + \beta + \gamma',\label{eq:area_p4}
\end{align}
where the spherical angle \(\gamma'\) is the angle between the cutting planes forming cutting lines \(\mathcal{D}_1\) and \(\mathcal{D}_2\), given by
\begin{equation}
\gamma' = \arccos[ \cos(\gamma) \sin(\alpha) \sin(\beta) - \cos(\alpha) \cos(\beta) ].
\label{eq:gammaprime}
\end{equation}

Applying the \(M_{57^{\circ}, 57^{\circ}, 120^{\circ}}\) map \(2\times 10^4\) times and recording the positions of the cutting lines at the end of each iteration while keeping the blue and red coloring of each of the cutting lines, generates the pattern shown in Fig.~\ref{fig:pwi_action}(c) where red and blue are used for the first and second cuts of the protocol, respectively \footnote{Since \(M_\abg \mathcal{D}_2 = \partial S\), the union of all iterates of \(\mathcal{D}_2\) and \(\partial S\) coincide. Therefore, we need only show the blue lines in Fig.~\ref{fig:pwi_action}. We only show blue lines for multiple iterations.}. The overall structure has white non-mixing islands (uncut regions), termed \textit{cells} \cite{Park2017, Goetz2001, Goetz2000}, spread throughout the domain. In addition, while it is not obvious from Fig.~\ref{fig:pwi_action}(c), the structure is fractal \cite{Park2017}.

As the number of iterations, \(N\), approaches infinity, the structure formed by the cutting lines (top row of Fig.~\ref{fig:increasing-iters}) reveals the \textit{singular set}, \(E\), which is the union of all pre- and post-images of the discontinuities \(\mathcal{D}\) \cite{Fu2008, Park2016, Ashwin2005, Kahng2009},
\begin{equation}
 E = \bigcup^\infty_{n=-\infty} M^n_\abg \mathcal{D}.
\end{equation}
The closure of \(E\) is the \textit{exceptional set}, \(\bar{E}\), containing \(E\) and its limit points. The exceptional set has been referred to as the ``skeleton'' of mixing because of its structural role in specifying mixing and non-mixing regions \cite{Juarez2010, Smith2017, Zaman2018}. Visually, \(E\) approximates \(\bar{E}\) when cutting lines are given non-zero thickness.

A Hamiltonian system for a kicked harmonic oscillator that exhibits similar behavior \cite{Scott2001,Scott2003} provides strong numerical evidence that cells form a circle packing of the domain that is not dense, suggesting that \(\bar{E}\) is a fat fractal \cite{Farmer1983, Grebogi1985} (i.e.\ has non-zero measure) for almost all protocols \cite{Park2017}. Since \(E\) is a countable collection of zero-measure lines, the non-zero measure of \(\bar{E}\) comes entirely from the boundary points of \(\bar{E}\), \(\bar{E}\setminus E\) where ``\(\setminus\)'' denotes \(E\) is removed from \(\bar{E}\).

An example of how the cutting line structure develops for increasing iterations of the \((90^{\circ},90^{\circ},60^{\circ})\) map is shown in Fig.~\ref{fig:increasing-iters}(a). Note that the prominent circular cells become visible after only 20 iterations, though at this point they are 14-sided (irregular) polygons with each side formed by a cutting line. Successive iterations of the PWI rotates the 14-sided polygons about a central axis, trimming off its corners and leading to a truly circular cell as \(N \rightarrow \infty\). The set of cells, or non-mixing islands, is called the stable or regular set \cite{Scott2003,Ashwin2018}. The stable set is the complement of \(\bar{E}\) and contains all non-mixing islands. Typically, the stable set has structure at all scales; for this protocol it forms intricate, but incomplete, circle packing of the HS \cite{Park2017, Scott2003}. Cells are the maximally open neighborhoods around periodic points of the domain that contain points with the same \textit{periodic itinerary} [i.e., a symbolic representation of an orbit by way of the atom labels (1,2,3,4) in Fig.~\ref{fig:pwi_action}(a)] \cite{Bruin2003, Smith2017}. The complement of \(\bar{E}\), evident in Fig. \ref{fig:pwi_action}(c) and Fig.~\ref{fig:increasing-iters}(a) at \(2\times10^4\) iterations as the white portion of the domain, is the remainder of the HS that is never cut.

The relationship of the exceptional set to mixing can be understood by iterating the PWI mapping to scramble a scalar field on the HS. The mixing of a scalar field---an initial condition with continuously varied shading---due to the \((90^{\circ},90^{\circ},60^{\circ})\) map as it is iterated is shown on the left of Fig.~\ref{fig:increasing-iters}(b). With each iteration, regions of the domain are cut and shuffled, while other regions remain uncut. By 20 iterations, the large non-mixing circular cells have taken shape. These circular cells follow periodic paths through the domain as the mapping is iterated, but also undergo internal rotation about a central elliptic periodic point within the cell \cite{Smith2017, Park2016, Park2017}.
This internal rotation is evident in the large lower left cell (as well as other period-2 cells in the map), which has a light color on the left at 20 iterations and the same light color on the right at 200 iterations, at the top at \(2\times 10^3\), and back on the left at \(2\times 10^4\) iterations. Cells are often circular in shape due to irrational internal rotation about a central elliptic periodic point \cite{Smith2017}. However, a cell that rotates about its periodic center by a rational fraction of \(\pi\) will be polygonal.

The open circular cells outside of \(E\) correspond to unmixed circular regions in iterates of the scalar field, while dense regions of cutting lines in \(E\) correspond to well-mixed regions in iterates of the scalar field, which appear gray (the average color of the initial condition).  The correspondence between \(E\) and mixing for non-orthogonal axes is consistent with previous results for orthogonal rotation axes (\(\gamma =90^{\circ}\)) \cite{Park2016}.

\section{Measuring the coverage of \(\bar{E}\)} \label{sec:tildeE}

One measure of the degree of mixing of the the domain, is the \textit{fractional coverage} of \(\bar{E}\), \(\Phi\) \cite{Park2017, Ashwin1997a}, which is inversely correlated with the Danckwerts' intensity of segregation, a measure of the degree of mixing for segregated initial conditions \cite{Park2017, Danckwerts1952}. Formally, \(\Phi\) is the Lebesgue measure (here a measurement of 2D area) of \(\bar{E}\) normalized by the area of the unit  hemispherical shell, \(\Phi = \mu_L(\bar{E})/2\pi\) \cite{Park2017}. Although cutting lines have zero measure, \(\bar{E}\) has non-zero measure for almost all protocols \cite{Scott2003, Park2017, Smith2017}
\footnote{We conjecture that the set of protocols for which \(\bar{E}\) has zero measure has zero measure in the protocol space. \(\bar{E}\) has zero measure if and only if it is a polygonal tiling, consisting of rational cellular rotation (countably infinite rationals) about a countable (possibly infinite) number of periodic points. For any fixed \(\gamma\), there are then a countably infinite number of polygonal tilings resulting in zero measure in the protocol space. The conjecture is false if there is some positive area region in the protocol space that results in the same polygonal tiling, requiring that internal rotation angles are constant in the positive area region. If internal rotation angles are analytic, this would imply internal rotation angles are constant everywhere, obviously false.}.

\begin{figure*}
  \includegraphics[width = 0.95\textwidth]{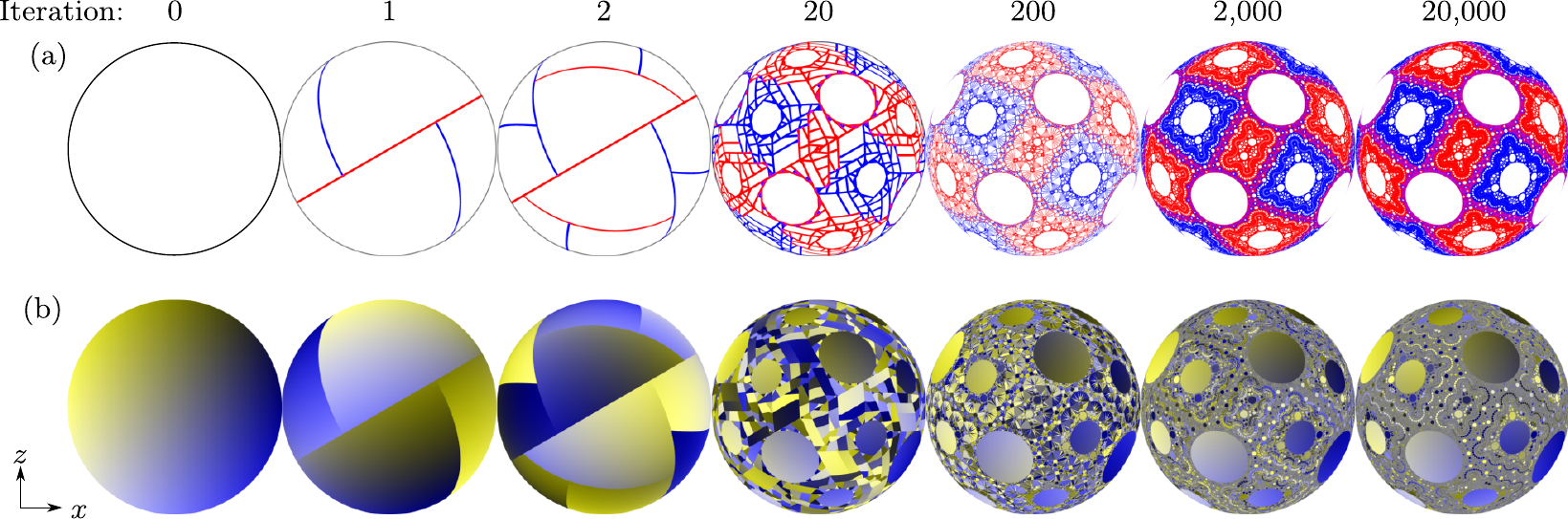}
  \caption{Iterating the \((90^{\circ},90^{\circ},60^{\circ})\) map. (a) The cutting line structure approaches the structure of the exceptional set as the number of iterations increases. Circular regions appear around periodic points. We use \(\varepsilon = 0.01\) for \(1 \le N \le 20\) and \(\varepsilon = 0.001\) for \(200\le N \le 2\times 10^4\). Larger values of \(\varepsilon\) are used for small numbers of iterations to give a visible thickness to the few cutting lines that are present. (b) With a colored initial condition (\(N=0\)), the mixed part of the domain for \(N = 2\times 10^4\) (gray region) closely matches regions in (a) that are densely cut, while circular unmixed regions correspond to open cells in (a).
  }
  \label{fig:increasing-iters}
\end{figure*}

Comparing the fractional coverage, \(\Phi\), of a large number of protocols is a computationally challenging task. To efficiently construct an approximation for the structure of \(E\) and subsequently obtain an estimate for \(\Phi\) as a function of protocol, we approximate \(\bar{E}\) by giving the cutting lines that make up \(E\) a finite width, \(\varepsilon > 0\), referred to as \(\mathcal{D}_\varepsilon = \{x \in B_\varepsilon(y), y\in \mathcal{D}\}\), i.e.\ all \(x\) within a geodesic \(\varepsilon\)-radius ball of any point \(y\) in \(\mathcal{D}\). We refer to this fattened \(E\) as \(E_\varepsilon = \{x \in B_\varepsilon(y), y \in E\}\), and the approximation of \(E_\varepsilon\) after \(N\) iterations of the PWI applied to \(\mathcal{D}_\varepsilon\) as \(\tilde{E}_{\varepsilon,N}\). Grebogi et al.\ \cite{Grebogi1985} utilize a similar fattening in determining the external dimension of fat fractals. Whether or not \(N\) is sufficient to completely cover \(\bar{E}\) with \(\tilde{E}_{\varepsilon,N}\) depends on the choice of \(\varepsilon\) and the specific PWI protocol. But this fattened exceptional set can completely cover \(E\) within a finite number of iterations since for every \(\varepsilon > 0\), there exists a finite \(N > 0\) such that 
\begin{equation}
  \label{eq:fat_coverage}
  E \subset \bar{E} \subset  \tilde{E}_{\varepsilon,N} = \bigcup^N_{n=0} M^n_\abg \mathcal{D}_\varepsilon  \subset E_\varepsilon.
\end{equation}
In other words, after a finite number of iterations, \(N\), all cells with radius smaller than \(\varepsilon\) are covered by \(\tilde{E}_{\varepsilon,N}\), thereby completing the approximation such that any further iterations do not add additional information about the location or size of \(\bar{E}\). We refer to \(\tilde{E}_{\varepsilon,N}\) as ``complete'' if \(N\) is at least sufficient to satisfy Eq.~\ref{eq:fat_coverage}. Appendix~\ref{sec:fat_coverage_proof} provides a proof of Eq.~\ref{eq:fat_coverage}, though the proof gives no information about how to find \(N\) \footnote{In fact, for any choice of \(N\) and \(\varepsilon\), there exist protocols such that \(\tilde{E}_{\varepsilon,N}\) is not complete. Take, for example, a single axis rotation with sufficiently small rotation angle. The number of iterations for completeness \(N\) apporaches \(\infty\) as the rotation angle approaches zero. In other words, there is no universal \(N\) that guarantees completeness. We conjecture that there is a universal bound away from these pathological/degenerate cases.}. In the limit as \(\varepsilon \rightarrow 0\) and \(N \rightarrow \infty\), the fat-lined fractional coverage of \(\tilde{E}\), \(\Phi_{\varepsilon,N}\), is equal to the fractional coverage of \(\bar{E}\), \(\Phi\). \(\Phi\) is expected to be positive for almost all protocols based on box-counting measurements of \(E\) \cite{Park2017}. Although \(E\) is formally the union of all pre- and post-images of \(\mathcal{D}\) under \(M_\abg\), due to symmetry in the PWI mapping, only the post-images (or the pre-images) of \(\mathcal{D}\) are required to construct all of \(E\) [see discussion in Appendix~\ref{sec:symmetries} around Eq.~\ref{eq:plus_minus_equiv} for additional details].

In previous studies, \(E\) was constructed by seeding points along (or near) \(\mathcal{D}\) and iterating their positions \cite{Park2016,Park2017}. This method has the advantages of providing a close approximation to the structure of \(E\) if seeded points in \(\mathcal{D}\) are sufficiently dense and \(\Phi\) can be directly measured using a box-counting method with equal-area boxes throughout the hemisphere. However, this Lagrangian, cutting-line-centered, box-counting method has two major shortcomings: (1) as \(\mathcal{D}\) is iterated, it splits into small segments and eventually individual seed points separate from one another, destroying all knowledge of the curves between separated points; (2) \(\mathcal{D}\) and its iterates are decoupled from the HS,  which means the entirety of \(E\) must be generated to investigating the structure of \(E\) in small regions of the HS, which results in significant memory usage and wasted computation, especially for large numbers of iterations.

The following Eulerian, fat-line, domain-centered method addresses the shortcomings of the cutting-line-centered box-counting method and optimizes generation of \(\tilde{E}\) by utilizing parallel computing. \(\tilde{E}\) can be computed by iterating the positions of a grid of tracer points using the PWI and labeling all points in the grid that fall within \(\mathcal{D}_{\varepsilon}\) at some iteration.
%Computationally, it is easier to determine if the point falls within \(\varepsilon\) of \(\partial S\) after each of the two rotations that make up a single iteration (using the rotational procedure outlined in Section~\ref{sec:pwi-def}) since \(\partial S\), which creates the cuts at full- and half-iterations, is independent of the protocol parameters.

Using the fat-line method to compute \(\Phi\), \(\tilde{E}\) can be imagined as a sieve which is dusted with a uniform distribution of seed-points. The fraction of seed-points that do not fall through the sieve is the value of \(\Phi_{\varepsilon,N}\). The accuracy of the measurement is determined by the number of points used, but the value of \(\Phi_{\varepsilon,N}\) depends entirely on the sieve, \(\tilde{E}_{\varepsilon,N}\).

Since measuring \(\Phi_{\varepsilon,N}\) depends only on the fraction of grid points within \(\tilde{E}_{\varepsilon, N}\), computation can cease once a point's membership in \(\tilde{E}_{\varepsilon,N}\) is confirmed or \(N\) iterations are completed. The points used to poll \(\tilde{E}\) are selected from an equal-area distribution on the hemisphere to ensure an accurate measurement of area. In this paper, a Cartesian grid of polling points is projected onto the hemisphere using the Lambert azimuthal equal-area projection \cite{Snyder1987}, details of which are provided in Appendix~\ref{sec:lambert}.

The fat-line method avoids the break-up of cutting-lines as the number of iterations increases since cutting lines remain fixed and continuous. It also couples the domain and cutting lines, which allows selective computation of details of sub-regions without wasting computation time and memory on other regions. Since seeded points are independent from one another, the fat-line method is ``embarrassingly parallel'' \cite{Herlihy2012}. Therefore, it can be executed efficiently using a GPU architecture (here, NVIDIA's CUDA architecture) by assigning each polling-point its own GPU core. The increase in speed from the serial box-counting method for a comparable resolution is more than three orders of magnitude. 

Furthermore, the fat-line method is mostly independent of the resolution of the seed-point grid. Although more points increase the accuracy of the measurement of \(\Phi_{\varepsilon,N}\), the true value of \(\Phi_{\varepsilon,N}\) is wholly determined by \(\varepsilon\) and \(N\). Measurements of \(\Phi_{\varepsilon,N}\) obtained in this way are uncertain when the number of seed-points are low, but the general trends in \(\Phi_{\varepsilon,N}\) across protocols, discussed in Section~\ref{sec:phase_space}, remain unaffected by this uncertainty.

The problem of guaranteeing that \(\tilde{E}\) has been sufficiently resolved by \(N\) iterations is not unique to this method, and an \textit{a priori} method for determining \(N\) as a function of \(\varepsilon\) and protocol is not known to us. Previous box-counting methods relied on the number of iterations between visiting new boxes as a metric for determining completeness of an exceptional set and stopping the computation \cite{Park2017}. Although a similar stopping condition could be implemented using the grid of tracer points, computations are so fast on a parallel architecture that it is unnecessary to implement for almost all protocols except those asymptotically close to polygonal tilings where internal rotation of cells closely approaches a rational multiple of \(\pi\) (i.e., cells only approach circles as \(N \to \infty\)). Instead, typical values of \(N=2\times10^4\) iterations with \(\varepsilon = 1\times 10^{-4}\) are used with a \((2\times 10^3) \times (2\times 10^3)\) Cartesian grid of tracer points to generate images of \(\tilde{E}_{\varepsilon,N}\) for the protocols in this paper except when noted.

Since the fat-line method iterates the domain and not the cutting lines, other interesting features of the PWI can be easily measured. Mixing an initial condition such as that shown in Fig.~\ref{fig:increasing-iters}(b) becomes a trivial task of mapping a color from a tracer point's initial to its final position. When mixing initial conditions, as in Fig.~\ref{fig:increasing-iters}, it is more convenient to use the inverse mapping \(M_\abg^{-1}\) so that mixed conditions due to forward iterations end up on the well-aligned grid used to seed the HS (this is the \textit{Perron-Frobenius operator} for transforming scalar fields \cite{Lasota1994}).

\section{Effects of non-orthogonal axes \label{sec:non-orthog-E}}

\subsection{Changes in the exceptional set}

Using the approach described above, the effect of non-orthogonal axes on mixing and the structure of \(E\) can be explored. Starting from protocols with orthogonal axes (\(\gamma = 90^{\circ}\)), Fig.~\ref{fig:change-gamma} shows how \(\tilde{E}\) changes as the angle between axes increases to \(120^{\circ}\) for two different protocols. Consider first the \((90^{\circ},90^{\circ},90^{\circ})\) protocol in Fig.~\ref{fig:change-gamma}(a). Because of the rotation symmetry and orthogonal axes, this protocol results in a simple \textit{resonance}, or locally minimal mixing protocol that corresponds to a fully periodic structure and a polygonal tiling of the hemisphere \cite{Smith2017}. As \(\gamma\) increases from \(90^{\circ}\), \(\tilde{E}\) begins to fill in, losing its tiled appearance by Fig.~\ref{fig:change-gamma}(b) for the \((90^{\circ},90^{\circ},95^{\circ})\) protocol, effectively breaking the horizontal and vertical symmetries inherent to \((90^{\circ},90^{\circ},90^{\circ})\). The period-2 cell labeled \(A\) which rotates internally by \(120^{\circ}\) (a rational fraction of \(\pi\)) after every second iteration for \(\gamma = 90^{\circ}\), now rotates internally by \(119.49...^{\circ}\) (an irrational fraction of \(\pi\)) resulting in a circular cell. Within the \((90^{\circ},90^{\circ},95^{\circ})\) structure, Moir\'e-like patterns appear around the large period-2 cells which seem to be related to cutting lines tangent to the large cells. When \(\gamma\) is increased by another \(5^{\circ}\) to \((90^{\circ},90^{\circ},100^{\circ})\) in Fig.~\ref{fig:change-gamma}(c), smaller cells appear between the larger period-2 cells. At \(\gamma = 120^{\circ}\) in Fig.~\ref{fig:change-gamma}(d), the period-2 cells have significantly diminished in size and large period-4 cells, one of which is labeled \(B\), have appeared. It is evident that the fractional coverage of \(\tilde{E}\) increases as \(\gamma\) increases, and corresponding values of \(\Phi_{\varepsilon,N}\) are indicated in the figure.

Variation in \(\tilde{E}\) also occurs for the \((57^{\circ},57^{\circ},\gamma)\) protocols, but it is somewhat different. For orthogonal axes, \(\tilde{E}\) has relatively large cells (like period-3 cell \(C\)) surrounded by smaller cells, as shown in Fig.~\ref{fig:change-gamma}(e). When \(\gamma\) is increased to \(95^{\circ}\) in Fig.~\ref{fig:change-gamma}(f), the period-3 cells increase in size. All other cells have decreased in size and many smaller cells have appeared. Nearly all of the smaller cells disappear when \(\gamma\) increases to \(100^\circ\), leaving only six larger period-3 cells and ten small period-5 cells (four of these cells are barely visible near the edge of the HS). Note that \(2n\) period-\(n\) cells would be expected \cite{Christov2014}, \(n\) from a regular set and \(n\) from a conjugate set. When \(\gamma = 120^{\circ}\), the period-3 cells have shrunk so much that  period-4 cells are now the largest.

\begin{figure}
  \includegraphics[width = 0.49\textwidth]{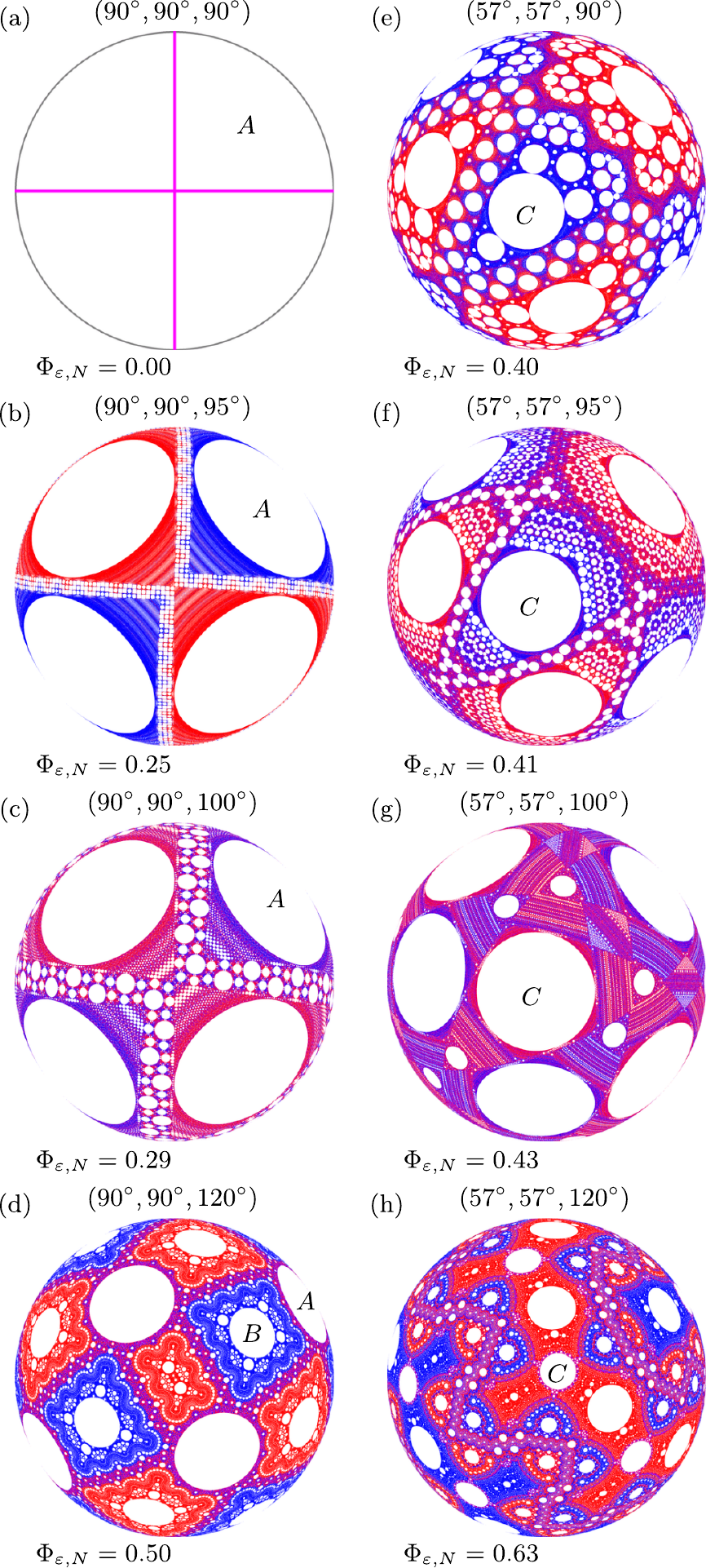}
  \caption{The exceptional set \(\tilde{E}_{\varepsilon,N}\) for increasing \(\gamma\) viewed from the negative \(y\)-axis with the \(z\)-axis pointing toward the top of the page [\(\varepsilon = 1\times 10^{-4}\) and \(N = 2\times 10^4\) except for \((90^{\circ}, 90^{\circ}, 90^{\circ})\) which uses \(\varepsilon = 0.01\) to yield visible line thickness]. (a-d) \((90^{\circ},90^{\circ},\gamma)\) and (e-h) \((57^{\circ},57^{\circ},\gamma)\), \(\gamma\) increases downward. One of the four period-2 cells for the \((90^{\circ},90^{\circ},\gamma)\) protocols is labeled \(A\). A period-4 cell is labeled \(B\) in (d) and a period-3 cell is labeled \(C\) in (e-h).}
  \label{fig:change-gamma}
\end{figure}

The \((57^{\circ},57^{\circ},100^{\circ})\) protocol has fewer noticeably large non-mixing islands than the other protocols shown in Fig.~\ref{fig:change-gamma}(e-h). This is due to the protocol's proximity in the protocol space to a resonant, periodically non-mixing, protocol at \((57^{\circ},57^{\circ},99.275...^{\circ})\)
%99.2753781272148
 shown in Fig.~\ref{fig:57resonance}. This resonant PWI is complete after only 160 iterations \footnote{This also implies that \(E = \bar{E}\) and the exceptional set has zero measure.}. The smaller, circle-like cells are actually period-5 32 sided polygons; a part of a polygonal cell is shown in the detail on the right of Fig.~\ref{fig:57resonance}. The polygonal cells tile the HS and result in zero coverage, which reveals that \(\Phi_{\varepsilon,N}\) does not monotonically increase with \(\gamma\). The near miss of the \((57^{\circ},57^{\circ},100^{\circ})\) protocol to a resonant protocol explains its densely packed cutting lines. Cutting lines gradually fill in closely to one another as \(\tilde{E}\) evolves, but the cutting lines slightly miss returning upon themselves by small amounts due to nearly rational internal rotation within cells. Of course, even though the densely packed lines suggest a high degree of mixing in those regions of the HS, it would take many iterations for this to occur. This behavior will be exploited later to locate resonant protocols by their nearly resonant neighbors that have very low, near-resonant coverage for small \(N\).

\begin{figure}
  \includegraphics[width = 0.405\textwidth]{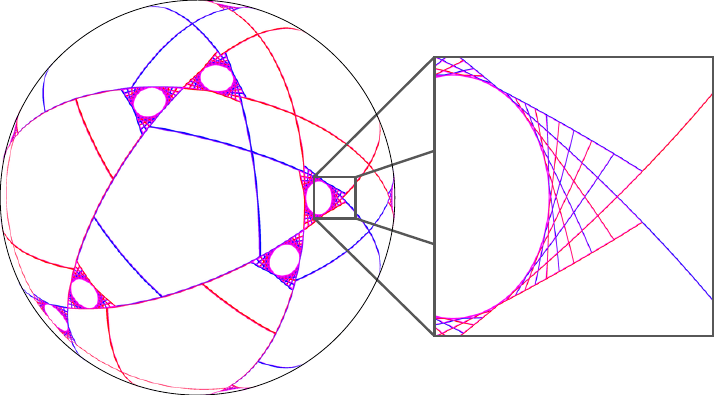}
  \caption{A resonance, or locally minimally mixing protocol, occurs for the \((57^{\circ}, 57^{\circ}, 99.3^{\circ})\) protocol. \(E\) is complete after only 160 iterations.  \(E\) forms a polygonal tiling, with large regular pentagons, small regular 32-gons, and irregular polygons (mostly or all quadrilaterals). Compare this simple cutting line structure to that of the densely packed \((57^{\circ},57^{\circ},100^{\circ})\) protocol in Fig.~\ref{fig:change-gamma}(g) where \(\gamma\) differs by only \(0.7^{\circ}\).
  }
  \label{fig:57resonance}
\end{figure}

\subsection{Special symmetries with orthogonal axes and breaking them}

As \(\gamma\) is increased from \(\gamma = 90^\circ\) in Fig.~\ref{fig:change-gamma}(a) to (b), it is clear that a symmetry is broken. The orthogonal axes case has additional barriers to mixing not present in the non-orthogonal case due to additional symmetries that occur when \(\gamma = 90^{\circ}\), specifically symmetries across \(\beta = 90^\circ\) (Eq.~\ref{eq:symm-beta-gamma}) and across \(\alpha = 90^\circ\) (Eq.~\ref{eq:symm-alpha-gamma}). An example of this appears for \((90^{\circ}, \beta, 90^{\circ})\) in the orthogonal axes case shown in Fig.~\ref{fig:tax_orthog} where there is a clear barrier to mixing regardless of the value of \(\beta\). The  left and right sides for the colored initial condition do not mix, even though \(\tilde{E}\) on the right appears to indicate mixing for \(\beta = 60^\circ\) and \(\beta = 30^\circ\). Likewise, for \((\alpha, 90^{\circ}, 90^{\circ})\) there would be a horizontal barrier to mixing (not shown, but analogous to the \((90^\circ, \beta, 90^\circ)\) case shown in Fig.~\ref{fig:tax_orthog} except with a horizontal instead of vertical barrier). The \((90^{\circ}, 90^{\circ}, 90^{\circ})\) protocol in Fig.~\ref{fig:change-gamma}(a) has both horizontal and vertical barriers, resulting in a non-mixing system. The barriers to mixing result directly from the inability of the two rotational axes to interact with one another due to the symmetry created by orthogonal axes. The barriers to mixing are not evident in the structure of \(\tilde{E}\). For example, in the right column of Fig.~\ref{fig:tax_orthog} even though there is a symmetry in \(\tilde{E}\) about the vertical \(z\)-axis, there is no evidence that there is no mixing between the left and right halves of the hemisphere.

\begin{figure}
  \includegraphics[width = 0.5\textwidth]{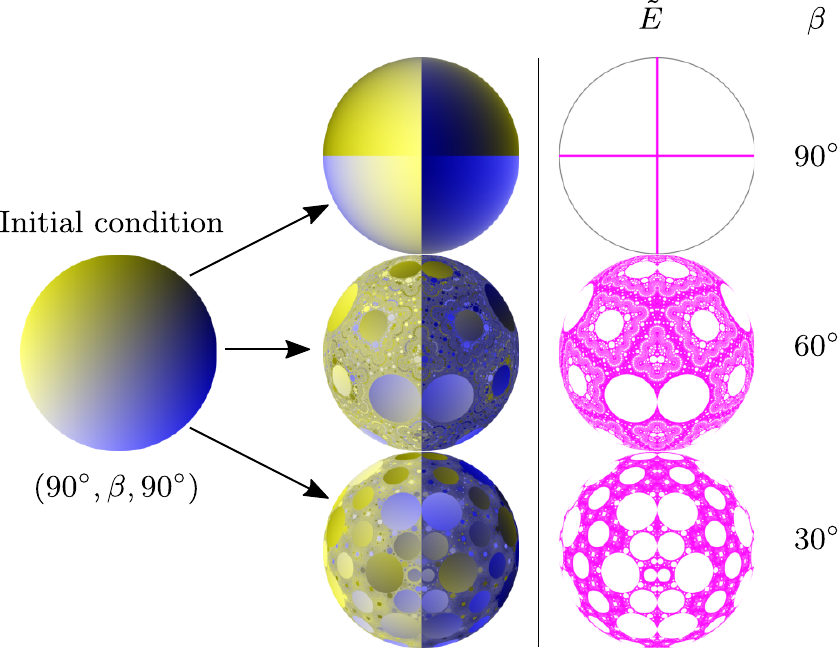}
  \caption{Barriers to mixing occur in the \((90^{\circ}, \beta, 90^\circ)\) case shown after \(2\times 10^4\) iterations of the initial condition for \(\beta = 30^{\circ}, 60^{\circ}, 90^{\circ}\). Although \(\tilde{E}\), shown on the right, appears to indicate mixing throughout the domain (except, of course, the cells) based on the fractional coverage of \(\tilde{E}\), the symmetry of the system resulting from orthogonal rotation axes  creates a barrier to mixing between the left and right halves of the hemisphere. 
  }
  \label{fig:tax_orthog}
\end{figure}

Figure~\ref{fig:mixing_init} demonstrates that this left-right barrier to mixing is not apparent for all initial conditions. Specifically, when the initial condition itself has left-right symmetry, as demonstrated in Fig.~\ref{fig:mixing_init}(a), the barrier to transport is not evident. For any other initial condition, such as those in Figs.~\ref{fig:mixing_init}(b) and (c), the barrier is evident. The barrier is most notable when there is large variation in the initial condition in the direction orthogonal to the barrier, as shown in Fig.~\ref{fig:mixing_init}(c).

%Figure~\ref{fig:mixing_init}(a) shows that this barrier is evident in the \(\alpha = 90^{\circ}\) case when there is variation in the initial condition across the barrier (left to right variation in shading), but Fig.~\ref{fig:mixing_init}(b) shows how the effect of the barrier is not evident when there is no variation across the barrier. Figure~\ref{fig:mixing_init}(c) demonstrates how the initial condition used throughout this paper, where each location in the domain has a unique color, avoids this problem.

\begin{figure}
  \includegraphics[width = 0.45\textwidth]{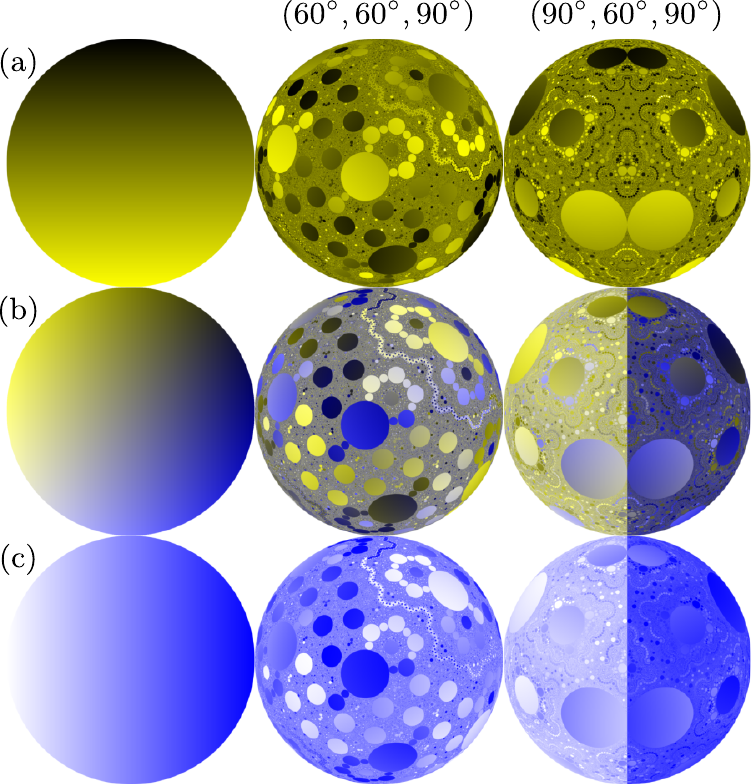}
  \caption{Various initial conditions (left column) under \((60^{\circ},60^{\circ},90^{\circ})\) and \((90^{\circ},60^{\circ},90^{\circ})\) after \(2 \times 10^4\) iterations show sensitivity of mixing to initial conditions. Initial condition varied in (a) vertical \(z\)-direction, (b) both \(x\)- (blue) and \(z\)- (yellow) directions, and (c) horizontal \(x\)-direction.}
  \label{fig:mixing_init}
\end{figure}

For any fixed \(\gamma \neq 90^\circ\), the symmetries evident in Fig.~\ref{fig:tax_orthog} (and shown in mathematical detail in Appendix \ref{sec:symmetries},  Eqs.~\ref{eq:symm-beta-gamma} and \ref{eq:symm-alpha-gamma}) are broken and no longer create degenerate mixing cases like those shown for \(\gamma = 90^\circ\), since \(\gamma \neq -\gamma \mod 180^\circ\) essentially removes a reflection symmetry from the system. Although the reflection symmetries across \((90^\circ,\beta, 90^\circ)\) and \((\alpha, 90^\circ, 90^\circ)\) are the only symmetries broken by non-orthogonal axes, breaking them opens up a much wider range of unique protocols to be considered. For example, the completely non-mixing exceptional set shown in Fig.~\ref{fig:change-gamma}(a), which is the result of both of these symmetries, becomes a mixing protocol by eliminating the aforementioned symmetries that occur when \(\gamma = 90^\circ\).

%%%%%%%%%%%%%%%%%%%%%%%%%%%%%%%%%%%%%%%%%%%%%%%%%%%%%%%%%%%%%%%%%%%%%%%%%%%%%
%%%%%%%%%%%%%%%%%%%%%%%%%%%%    COVERAGE     %%%%%%%%%%%%%%%%%%%%%%%%%%%%%%%%
%%%%%%%%%%%%%%%%%%%%%%%%%%%%%%%%%%%%%%%%%%%%%%%%%%%%%%%%%%%%%%%%%%%%%%%%%%%%%

\section{Resonances in the fractional coverage \(\Phi_{\varepsilon,N}\)} \label{sec:phase_space}

The fat-line method for constructing \(\tilde{E}\) described in Section~\ref{sec:tildeE} allows us to  efficiently generate \(\Phi_{\varepsilon,N}\) to identify resonances in the fractional coverage over a broad range of protocols. To this end, the number of iterations used to identify resonances is intentionally kept low, \(N= 500\), to better reveal resonant protocols through their nearly resonant neighbors in the protocol space as explained shortly.

\subsection{Revisiting orthogonal axes, \(\gamma = 90^{\circ}\)}

Although fractional coverage of \(\bar{E}\) for the orthogonal axis case (\(\gamma = 90^{\circ}\)) has been previously investigated \cite{Park2017, Park2016, Smith2017}, it is informative to contrast some of its features with those of \(\bar{E}\) for non-orthogonal axes. Since periodic rotations about the \(z\)-axis (by \(\tilde{M}_\theta^a\)) have period \(180^\circ\), \(\Phi_{\varepsilon,N}\) is examined for \(\gamma = 90^{\circ}\) over the range \(0 \le \alpha, \beta \le 180^{\circ}\). Figure~\ref{fig:phasespaces}(a) shows \(\Phi_{0.01,500}\) where dark gray represents low coverage and light gray represents high coverage. A value of \(\Phi_{0.01,500} = 1\) (white) means that \(\tilde{E}_{0.01,500}\) for that protocol completely covers the HS with fattened cutting lines by 500 iterations, while \(\Phi_{0.01,500} \approx 0\) (black) indicates no coverage. Of course, due to the fattening of \(\bar{E}\) into \(\tilde{E}_{0.01}\), values of \(\Phi_{0.01,500}\) can never be exactly zero, and without a way to determine a sufficient \(N\) \textit{a priori} such that \(\tilde{E}\) covers \(\bar{E}\), there is no way to know if \(\tilde{E}_{0.01,500}\) is complete (satisfies  Eq.~\ref{eq:fat_coverage}) or if \(\Phi_{0.01,500}\) is an over- or under-estimate. However, when comparing \(\Phi_{0.01,500}\) with a more accurate estimate, \(\Phi_{1 \times 10^{-3},2\times 10^4}\), for more than 140,000 protocols with \(\gamma = 90^\circ\), \(\Phi_{0.01,500}\) typically overestimates \(\Phi_{1 \times 10^{-3},2\times 10^4}\) (for 84.0\% of protocols), but with only a very small number (3.5\%) differing by more than 5\% in value. On average, \(\Phi_{0.01,500}\) overestimates \(\Phi_{1 \times 10^{-3},2\times 10^4}\) by 1.5\%. Protocols for which \(\Phi_{0.01,500}\) underestimates \(\Phi_{1 \times 10^{-3},2\times 10^4}\) by more than 5\% account for only 0.6\% of all the protocols.

\begin{figure*}
  \includegraphics[width = 0.73\textwidth]{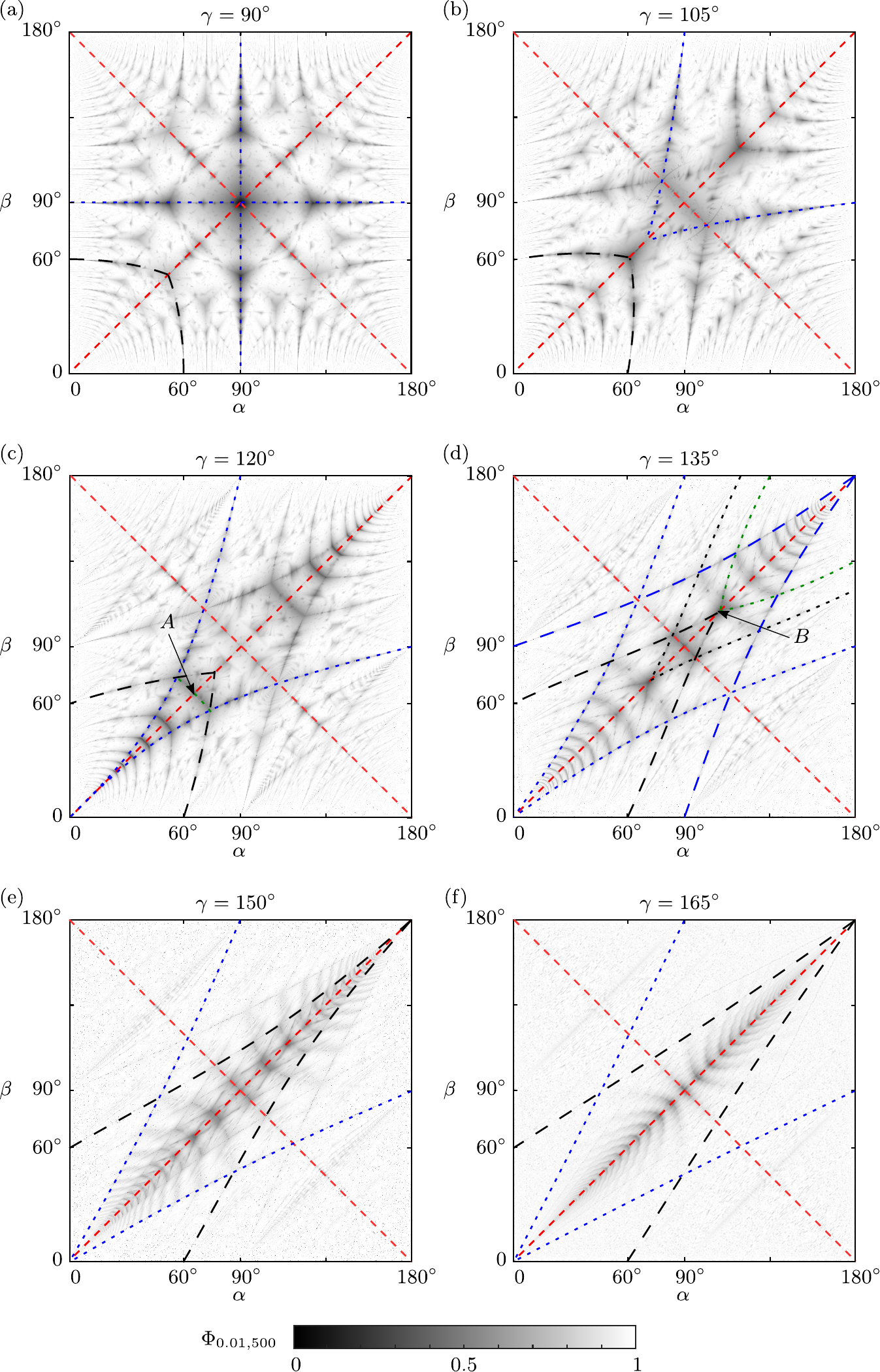}
  \caption{Fractional coverage, \(\Phi_{0.01,500}\), vs. rotation angles for \(\gamma = 90^{\circ}\) to \(165^{\circ}\) in \(15^{\circ}\) increments after 500 iterations. Increments in \(\alpha\) and \(\beta\) are \(0.33^\circ\). (a) For \(\gamma = 90^\circ\), reflection symmetry lines are shown as red (\(\alpha + \beta = 180^\circ\) and \(\alpha = \beta\)) and blue (\(\alpha = 90^\circ\) and \(\beta = 90^\circ\)) dashed lines. Symmetry along the red dashed lines is maintained for non-orthogonal axes while the blue dashed symmetries are lost. (a-f) Black dashed curves mark the locations of Arnold tongues originating at \((60^\circ, 0, \gamma)\) and \(( 0,60^\circ, \gamma)\). Blue dashed curves mark Arnold tongues starting at \(( 90^\circ,180^\circ, \gamma)\) and \(( 180^\circ,90^\circ, \gamma)\). (c) The protocol \((65.5^\circ, 65.5^\circ, 120^\circ)\) is labeled \(A\) and lies on a dark band between Arnold tongue intersections. (d) The protocol labeled \(B\) lies close to the intersection of the Arnold tongues from \((60^\circ, 0, \gamma)\), \(( 0,60^\circ, \gamma)\) and \((135^\circ, 180^\circ, \gamma)\) , \((180^\circ, 135^\circ,  \gamma)\) (green dashed curves). The short-dashed lines show the corresponding mirror Arnold tongues for long-dashed lines of the same color. The intersection of short-dashed and long-dashed lines are often locally resonant.
  }
  \label{fig:phasespaces}
\end{figure*}

Figure~\ref{fig:phasespaces}(a) has many local minima, which appear as dark regions, and represent protocols at or near resonances \cite{Smith2017}. Resonances are located along Arnold tongues that originate from rational fractions of \(\pi\) along the axes. Two such Arnold tongues are highlighted with black dashed curves emanating from \((60^\circ, 0, 90^\circ)\) and \((0, 60^\circ, 90^\circ)\). The Arnold tongues are centered along curves where periodic points of the PWI are equally spaced between two of the three cutting lines that form the atoms of the PWI  \cite{Smith2017}. Resonances occur whenever periodic points of the PWI, each of which corresponds with a particular periodic itinerary, are maximally far from the cutting lines defining the PWI and result in large cells in \(E\). In this way, resonances occur at the intersection of the three curves that represent a periodic point equidistant between two cutting line borders of an atom when \(\gamma = 90^\circ\) \cite{Smith2017}. For \(\gamma = 90^\circ\), the blue dashed lines, corresponding to Arnold tongues from \((90^\circ, 180^\circ, \gamma)\) and \((180^\circ, 90^\circ, \gamma)\) in Fig.~\ref{fig:phasespaces}, are also lines of symmetry in the protocol space.

Smith et al.\ \cite{Smith2017} have explained much of the structure in the \(\gamma = 90^\circ\) protocol space. Protocols further away from a resonant protocol in the \((\alpha, \beta, 90^{\circ})\) protocol space in Fig.~\ref{fig:phasespaces}(a) have periodic points closer to cutting lines. As a periodic point approaches a cutting line, its associated cell shrinks until the periodic point encounters a cutting line and the cell is annihilated (i.e., the radius goes to zero and the point is no longer a periodic point). Thus, not only is there a protocol that maximizes the size of the cell associated with each periodic itinerary, but there is also a well defined region within the protocol space where the cell exists (i.e., for protocols outside of this region the periodic itinerary between atoms defining the periodic cell is not possible). An example cell with a 411 itinerary (i.e.\ the cell travels periodically through the atoms \(P_4 \to P_1 \to P_1 \to P_4 \cdots\)) is labeled \(C\) in Fig.~\ref{fig:change-gamma}(e) and the size of the cell is maximized at the resonance shown in Fig.~\ref{fig:57resonance} when the angle between the axes increases to \(99.3^\circ\). The cell shrinks as the protocol moves away from this resonance.

Resonances along \(\alpha = \beta\) and \(\alpha + \beta = 180^\circ\), corresponding to the red dashed diagonal lines in Fig.~\ref{fig:phasespaces}(a), at the intersection of Arnold tongues have exceptional sets that are polygonal tilings, similar to \(\bar{E}\) shown in Figs.~\ref{fig:change-gamma}(a) and \ref{fig:57resonance}, and, thus,  have zero coverage \cite{Smith2017}. The protocols nearby in protocol space have nearly resonant structure such that \(\Phi_{\varepsilon,N}\) grows slowly with \(N\). By using \(N=500\), these structures do not complete and thus resemble the resonant protocols. When the protocol space is sampled, hitting a resonant structure exactly is generally impossible \footnote{As will be shown later, the resonances generally occur for protocols whose angles are incommensurate with \(\pi\). Therefore, it is never possible to find the resonances exactly if the protocol space is sampled on a uniform grid.}, but using a low number of iterations effectively fattens resonant regions of the protocol space allowing them to be observed more easily. Changing the number of iterations does not change the locations of resonances in the protocol space.

\subsection{Non-orthogonal axes} \label{sec:non-orthog}

The fractional coverage of \(\tilde{E}\) for several values of \(\gamma > 90^{\circ}\) is shown in Fig.~\ref{fig:phasespaces}(b-f) \footnote{See the Supplementary Material at  [URL will be inserted by publisher] for a video sequence of coverage maps as \(\gamma\) is varied from \(0.5^\circ\) to \(179.5^\circ\) in \(0.5^\circ\) increments.}. A previous study \cite{Juarez2012}, using PWI and a mixing metric based on the center of mass of seeded particles in the HS, demonstrated similar patterns for the quadrant \(0 \le \alpha, \beta \le 90^{\circ}\). Patterns outside of \(0 \le \alpha, \beta \le 90^{\circ}\) have not been previously examined. Due to symmetries in the system, the patterns for angles between rotation axes of \(180^\circ - \gamma\) are equivalent to those in Fig.~\ref{fig:phasespaces} when reflected vertically or horizontally about \(\alpha = 90^\circ\) or \(\beta = 90^\circ\), respectively. The protocol spaces for \(\gamma = 0\) or \( 180^{\circ}\) are not shown since they are degenerate cases of rotation about a single axis and display no interesting features.

Obvious features of the coverage, \(\Phi_{0.01,500}\), in protocol space for orthogonal axes in Fig.~\ref{fig:phasespaces}(a) are the lines of symmetry, shown as red and blue dashed lines. The red dashed lines represent symmetry between \((\alpha, \beta, \gamma)\) and \((\beta,\alpha,\gamma)\) (Eq.~\ref{eq:symm-reverse-time}) and between \((\alpha, \beta, \gamma)\) and \((\pi - \alpha, \pi-\beta, \gamma)\) (Eq.~\ref{eq:symm-negate-two}). For non-orthogonal axes (\(\gamma\neq 90^\circ\)) these symmetries extend as reflection planes along \(\alpha + \beta = 180^\circ\) and \(\alpha = \beta\). The blue dashed lines in Fig.~\ref{fig:phasespaces}(a) represent a symmetry  between \((\abg)\) and \((\pi - \alpha,\beta,\pi - \gamma)\) (Eq.~\ref{eq:symm-alpha-gamma}) and between  \((\alpha, \beta, \gamma)\) and \((\alpha,\pi - \beta,\pi -\gamma)\) (Eqs.~\ref{eq:symm-beta-gamma} and \ref{eq:symm-negate-gamma}). When \(\gamma = 90^{\circ}\), these symmetries are reflection symmetries across the blue lines, since \(\gamma = -\gamma \mod 180^\circ\) when \(\gamma = 90^\circ\). However, this reflection does not extend to protocols with non-orthogonal axes, which is evident in Figs.~\ref{fig:phasespaces}(b-f). A full exposition on the symmetries of the PWI can be found in Appendix~\ref{sec:symmetries}.

The  Arnold tongues along which resonances are located and which originate from rational fractions of \(\pi\) along \(\alpha = 0\) and \(\beta = 0\) for orthogonal rotation axes in Fig.~\ref{fig:phasespaces}(a) are maintained for non-orthogonal axes, but as \(\gamma\) increases, these tongues ``tilt'' (to the right for those originating from \(\beta = 0\) and upward for those originating from \(\alpha = 0\)), eventually approaching straight lines as \(\gamma \to 180^\circ\). An example pair of tongues starting at \((60^\circ,0, \gamma)\) and \((0,60^\circ,\gamma)\) is highlighted with dashed black curves in Fig.~\ref{fig:phasespaces}(a-f). Another pair originating from \((90^\circ, 180^\circ,\gamma)\) and \((180^\circ,90^\circ,\gamma)\) is marked with dashed blue curves. When \(\gamma\) is decreased from \(90^{\circ}\) (not shown), the Arnold tongues tilt in the opposite direction and similarly approach straight lines.

As \(\gamma\) increases, the tilting of the Arnold tongues means that the resonant structures (dark regions) that lie within the \(\alpha = \beta\) and \(\alpha + \beta = 180^{\circ}\) symmetry planes move along the symmetry planes, following the intersections of Arnold tongues as they tilt. An example is the resonance associated with the polygonal tiling for the \((57^\circ, 57^\circ, 99.3^\circ)\) protocol in Fig.~\ref{fig:57resonance}.  The Arnold tongues starting at \((60^\circ, 0, 90^\circ)\) and \((0, 60^\circ, 90^\circ)\) in Fig.~\ref{fig:phasespaces}(a) tilt as \(\gamma\) increases, so that their intersection (the cusp at the intersection of the black dashed curves) moves to the right along the \(\alpha=\beta\) symmetry line.  When this intersection passes through the \((57^\circ, 57^\circ, 99.3^\circ)\) protocol between Fig.~\ref{fig:phasespaces}(a) and \ref{fig:phasespaces}(b), the resonance occurs.

In addition, as the tongues tilt, new intersections with other tongues originating from \(\alpha = 180^\circ\) or \(\beta = 180^\circ\) are created, resulting in additional resonant structures. These additional resonances spread out symmetrically from the \(\alpha = \beta \) symmetry plane as \(\gamma\) is increased. 
This can be seen between \(\gamma = 105^\circ\) and \(\gamma = 120^\circ\) in Figs.~\ref{fig:phasespaces}(b,c). At exactly \(\gamma = 108^\circ\) (not shown), the black and blue dashed lines intersect at their cusp and increasing \(\gamma\) further introduces two intersection points on either side of \(\alpha = \beta\). When \(\gamma = 120^\circ\) a dark band of low coverage protocols between these intersections, highlighted with a green dashed line in Fig.~\ref{fig:phasespaces}(c), is evident. The protocol \(A\), \((65.5^\circ, 65.5^\circ, 120^\circ)\), lies along this dark band between Arnold tongue intersections (blue and black intersecting), and will be discussed in more detail later. This band is defined by the pair of Arnold tongues starting at \((60^\circ,0^\circ,120^\circ)\) and \((0^\circ,60^\circ,120^\circ)\) labeled with black dashed curves, and the pair of Arnold tongues starting at \((90^\circ,180^\circ,120^\circ)\) and \((180^\circ,90^\circ,120^\circ)\), labeled with blue dashed curves. More of these dark bands appear between the intersections of two pairs of Arnold tongues as \(\gamma\) is varied, and many more are evident when \(\gamma = 120^\circ\) along the length of the \(\alpha = \beta\) symmetry plane.
%The first of these can be seen along \(\alpha + \beta = 180^\circ\) at the center of Fig.~\ref{fig:phasespaces}(b) as the Arnold tongues originating from \((90^\circ, 180^\circ,105^\circ)\) and \((180^\circ, 90^\circ,105^\circ)\) labeled with a dashed blue curves intersect their unhighlighted mirror images from \((90^\circ, 0, 105^\circ)\) and \((0, 90^\circ,105^\circ)\).
%Between these new resonant protocols spreading from the symmetry plane, dark bands of low coverage protocols appear, which are most evident in Fig.~\ref{fig:phasespaces}(c) (\(\gamma = 120^{\circ}\)) as dark bands crossing perpendicular to the \(\alpha = \beta \) symmetry plane, as the intersection of the two labeled pairs of tongues spread from the symmetry plane. One protocol along such a dark band, \((65.5^\circ, 65.5^\circ, 120^\circ)\), has been labeled \(A\) in Fig.~\ref{fig:phasespaces}(c).

Tongues from \((0,90^\circ, \gamma)\) and \((90^\circ, 0, \gamma)\) [highlighted with long-dashed blue lines in Fig.~\ref{fig:phasespaces}(d)] intersect their symmetric counterparts [highlighted  with short-dashed blue lines in Figs.~\ref{fig:phasespaces}(b-f)] from \((180^\circ,90^\circ, \gamma)\) and \((90^\circ,180^\circ, \gamma)\) as \(\gamma\) is increased from \(90^\circ\) creating new resonances along \(\alpha + \beta = 180^\circ\) at their intersection. Likewise, the tongues originating from \((60^\circ, 0, \gamma)\) and \((0, 60^\circ, \gamma)\) labeled with dashed black curves also create new resonances along \(\alpha + \beta = 180^\circ\) when they intersect their mirror images [highlighted with short-dashed black lines Fig.~\ref{fig:phasespaces}(d)] from \((120^\circ, 180^\circ, \gamma)\) and \((180^\circ, 120^\circ, \gamma)\) as \(\gamma\) increases from \(\gamma = 120^{\circ}\) [Fig.~\ref{fig:phasespaces}(c)] to \(\gamma = 135^{\circ}\) [Fig.~\ref{fig:phasespaces}(d)]. The rate at which new intersections occur (and create dark band between them) as \(\gamma\) increases grows such that by \(\gamma = 165^{\circ}\) many of these spreading resonances and their connecting dark bands have blended together to form nested curves of low coverage in the protocol space in Fig.~\ref{fig:phasespaces}(f).

\subsection{ Polygonal tilings in the \(\alpha = \beta\) symmetry plane}

The intersections of symmetrical pairs of Arnold tongues, e.g.\ the two sets of black dashed curves in Fig.~\ref{fig:phasespaces}(d), occur exclusively on the diagonal symmetry planes \(\alpha + \beta = 180^\circ\) and \(\alpha = \beta\). The fractional coverage of \(\tilde{E}\), \(\Phi_{0.01,500}\) along the \(\alpha = \beta\) diagonal symmetry plane is shown in Fig.~\ref{fig:full-flame} for varying \(\gamma\). For context, the dark region at the center of Fig.~\ref{fig:phasespaces}(a), \((90^\circ,90^\circ,90^\circ)\), matches the dark region at the center of Fig.~\ref{fig:full-flame}(a), also \((90^\circ,90^\circ,90^\circ)\).

%\subsection{Examining the \(\alpha=\beta\) symmetry line}

\begin{figure*}
  \includegraphics[width = 0.9\textwidth]{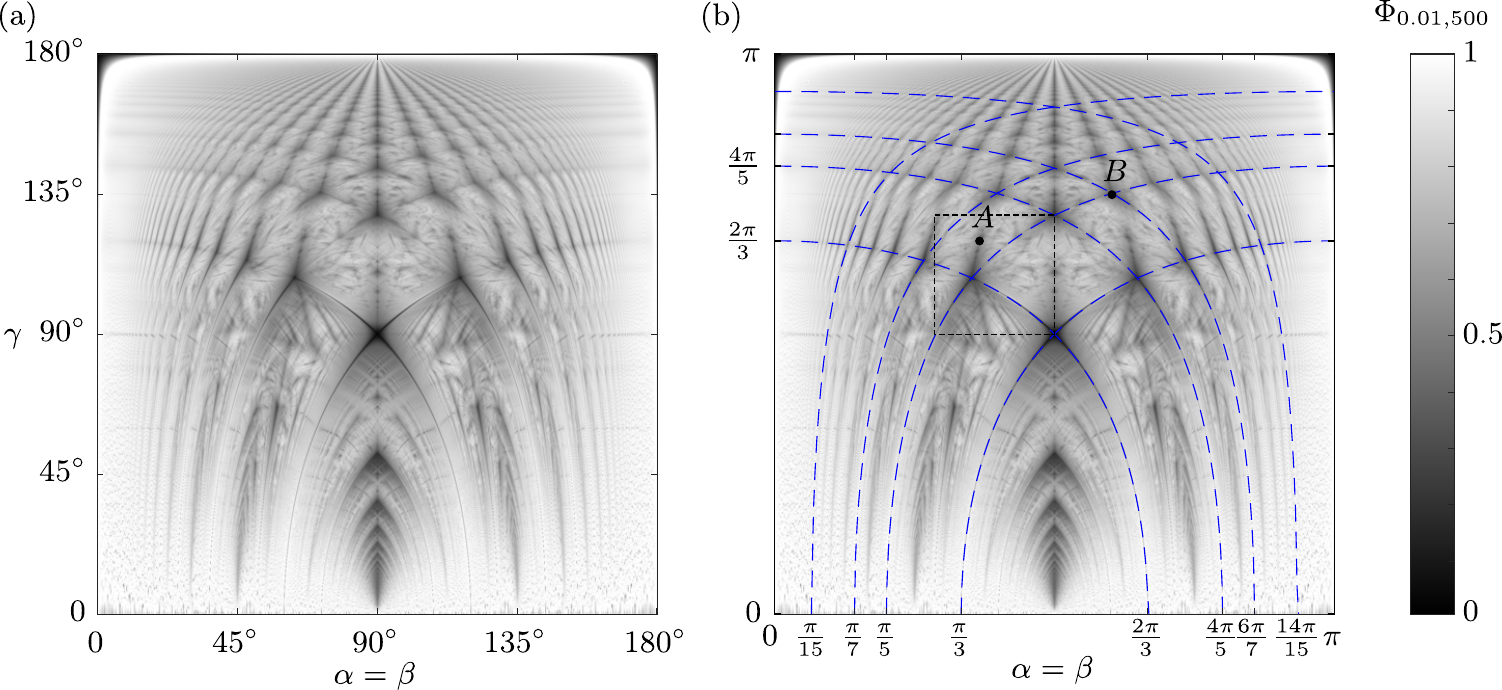}
  \caption{(a) Fractional coverage, \(\Phi_{0.01,500}\), for protocols along the \(\alpha=\beta\) symmetry plane and varying axes angle, \(\gamma\). The upper half, \(\gamma \ge 90^{\circ}\), corresponds with behavior along the \(\alpha = \beta\) line in Fig.~\ref{fig:phasespaces} and the lower half, \(\gamma \le 90^\circ\), corresponds with the \(\alpha + \beta = 180^{\circ}\) line in Fig.~\ref{fig:phasespaces} due to the symmetries in Eqs.~\ref{eq:symm-beta-gamma} and \ref{eq:symm-alpha-gamma}. 
  (b) The same symmetry plane with resonant curves defined by Eqs.~\ref{eq:branches-r} and \ref{eq:branches-l} for \(j = 1,2,3,7\). The boxed region is shown in Fig.~\ref{fig:branchsamples}. The protocol labeled \(A\), \((65.5^\circ, 65.5^\circ, 120^\circ)\), corresponds with Fig.~\ref{fig:phasespaces}(c). The protocol labeled \(B\) near the intersection of the branches from \((\pi/5,\pi/5,0)\) and \((6\pi/7,6\pi/7,0)\) corresponds with \(B\) in Fig.~\ref{fig:phasespaces}(d).
  }
  \label{fig:full-flame}
\end{figure*}

The most obvious features of the \(\alpha = \beta\) symmetry plane shown in Fig.~\ref{fig:full-flame} are the dark branches \footnote{To differentiate between the Arnold tongues shown in Fig.~\ref{fig:phasespaces} for fixed \(\gamma\) and the tongues in the \(\alpha = \beta\) plane, we refer to tongues in the \(\alpha = \beta\) plane as \textit{branches}.} of resonant structures sprouting from odd divisors of \(\pi\) [i.e.\ \(\pi/(2j +1)\) where \(j > 0\) is an integer] at \(\gamma = 0\) and their complements coming from \(\pi-[\pi/(2j + 1)]\), both of which are highlighted with dashed blue lines in Fig.~\ref{fig:full-flame}(b). These branches follow the intersections of pairs of Arnold tongues; for example, the branch originating from \((2\pi/3, 2\pi/3, 0)\) in Fig.~\ref{fig:full-flame}(b) follows the cusp intersection of the Arnold tongues labeled with blue short-dashed lines in Fig.~\ref{fig:phasespaces}. As mentioned earlier, for orthogonal rotation axes, these intersections mark the locations of polygonal exceptional set structures \cite{Smith2017}.

The curves defining these resonant branches in the symmetry plane originating from \(\pi - [\pi/(2j+1)]\) and highlighted with blue dashed curves in Fig.~\ref{fig:full-flame}(b) are level curves of \(\cos(\alpha/2)\cos(\gamma/2)\) (verification in Appendix \ref{sec:non-ortho_angles}), specifically
\begin{equation}
  \cos\left(\frac{\alpha}{2}\right) \cos\left(\frac{\gamma}{2}\right) = \sin\left[\frac{\pi}{2 (2j+1)}\right]. \label{eq:branches-r}
\end{equation}
The curves defining the branches originating from \(\pi / (2j+1)\) are the mirror images across the \(\alpha = \beta = \pi/2\) line,
\begin{equation}
  \sin\left(\frac{\alpha}{2}\right) \cos\left(\frac{\gamma}{2}\right) = \sin\left[\frac{\pi}{2 (2j+1)}\right]. \label{eq:branches-l}
\end{equation}
For example, the branch originating from \((2\pi/3, 2\pi/3, 0)\) in Fig.~\ref{fig:full-flame}(b) [\((2j+1) = 3\) in Eq.~\ref{eq:branches-r}] lies along the curve \(\sin\left(\frac{\alpha}{2}\right) \cos\left(\frac{\gamma}{2}\right) = \sin\left(\frac{\pi}{6}\right)\) and follows the intersection of the blue dashed curves in Fig.~\ref{fig:phasespaces} within the \(\alpha = \beta\) plane. Likewise, the intersection of the highlighted Arnold tongues starting at \((60^\circ, 0, 0)\) and \((0, 60^\circ, 0)\) in Fig.~\ref{fig:phasespaces} (the intersection of black dashed curves) follows the curve originating at \((\pi/5, \pi/5, 0)\), such that \(2j+1 = 5\), in Fig.~\ref{fig:full-flame}(b). These curves appear to be integral to most of the structure in the symmetry plane.

One of the features of the polygonal tilings found previously along \(\alpha = \beta\) for orthogonal axes, \(\gamma = 90^\circ\), is the presence of (\(2j+1\))-gons \cite{Smith2017}. These polygons are cells that rotate internally by \(2j\pi/(2j+1)\) each time they complete their period-\((j+1)\) itinerary through the HS. The branches defined by Eqs.~\ref{eq:branches-r} and \ref{eq:branches-l} shown in Fig.~\ref{fig:full-flame}(b) contain polygonal resonant structures from \(\gamma = 0\) to their first intersection with another branch which is always the \(\pi/3\) or \(2\pi/3\) branch corresponding to \(2j+1 = 3\). To better illustrate this, Fig.~\ref{fig:branchsamples} includes some example exceptional sets along two of the prominent branches from the boxed region in Fig.~\ref{fig:full-flame}(b) focusing on the \(2j+1 = 5\) branch originating from the left and the \(2j+1 = 3\) branch originating from the right. Polygonal tilings occur for all \(\gamma \le 108^\circ\) along these two branches. A detailed analysis showing that rational internal rotation of cells occurs along these curves to create \((2j+1)\)-gons is presented in Appendix~\ref{sec:non-ortho_angles}.

These two branches intersect at \(\gamma = 108^\circ\). Along the \(2j+1 = 5\) branch originating from the left in Fig.~\ref{fig:branchsamples}, period-3 pentagons are present in the exceptional set. Along the \(2j+1 = 3\) branch originating from the right in Fig.~\ref{fig:branchsamples}, period-2 triangles are present in the exceptional set. The intersection of these two branches at \((63.435...^\circ, 63.435...^\circ, 108^\circ)\) has an exceptional set that is a polygonal tiling composed entirely of period-3 pentagons and period-5 triangles \footnote{There is a point along the \(j = 1\) branches at which triangular cells are annihilated and return with a different periodicity.}, i.e.\ it is half of an icosidodecahedron. The PWI for this protocol reconstructs the initial HS after only 15 iterations which is the least common multiple of 5 and 3, the periods of polygonal cells. The shortest-period cells present in the exceptional set for protocols near these branch intersections have periods equal to the sum of the integer labels for the branches (\(j_L\) from the left in Eq.~\ref{eq:branches-l} and \(j_R\) from the right in Eq.~\ref{eq:branches-r}). Since \((63.4^\circ, 63.4^\circ, 108^\circ)\) occurs at the intersection of the \(j_L = 2\) (left) and \(j_R = 1\) (right) branches, the shortest period cells, the pentagons, are period \(j_L + j_R = 3\). Similarly, a tiling composed of only period-2 triangles occurs at the intersections of \(j_L = 1\) and \(j_R = 1\), \((90^\circ, 90^\circ, 90^\circ)\) in Fig.~\ref{fig:branchsamples}, with nearby protocols having large period-2 cells. Note that either \(j_L = 1\) or \(j_R = 1\) for any polygonal tiling since polygonal tilings do not exist along these branches past the \(j = 1\) intersections, but this is valid for other protocols. For example, the protocol labeled \(B\) in Figs.~\ref{fig:phasespaces}(d) and \ref{fig:full-flame}(d), at the intersection of Arnold tongues labeled with green and black lines, has circular period-5 cells for its shortest period cells due to its proximity to the intersection of \(j_L = 2\) and \(j_R = 3\) branches. The \((63.4^\circ, 63.4^\circ, 108^\circ)\), the \((90^{\circ},90^{\circ},90^{\circ})\) protocol, and the \((70.5^\circ,70.5^\circ,60^\circ)\) protocol are the only protocols that are related to regular solids representing the projection onto the HS of an icosidodecahedron, an octahedron, and a cuboctahedron, respectively. It is likely that these are the only such solids that can be created using this form of PWI, since they are the only solids with projections onto the HS that are constructed exclusively with great-circles.

\begin{figure*}
  \includegraphics[width = 0.9\textwidth]{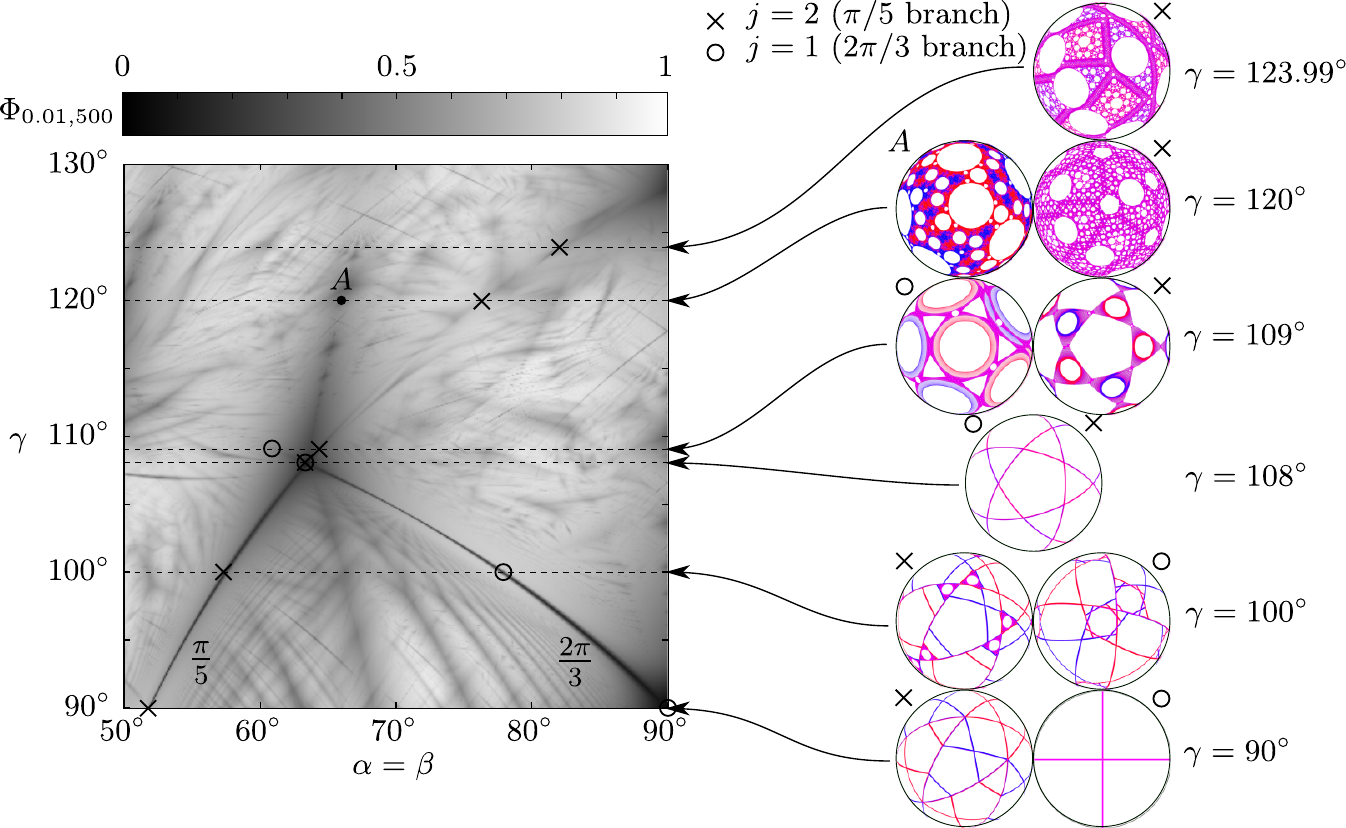}
  \caption{A detailed view of the region where the \(\frac{\pi}{5}\) and \(\frac{2\pi}{3}\) branches intersect marked by the dashed black box in Fig.~\ref{fig:full-flame}. Some of the \(\tilde{E}\) structures along these branches after \(N=2\times 10^4\) iterations are shown on the right with their locations in the protocol space marked by \(\times\) for the \(\pi/5\) branch (\(2j+1 = 5\)) and \(\circ\) for the \(2\pi/3\) branch (\(2j+1 = 3\)). The protocol labeled \(A\), \((65.5^\circ, 65.5^\circ, 120^\circ)\), corresponds with Figs.~\ref{fig:phasespaces}(c) and \ref{fig:full-flame}(b). \(\varepsilon = 0.01\) for \(\gamma \le 108^\circ\) and \(\varepsilon = 5\times 10^{-4}\) for \(\gamma \ge 109^\circ\).
  }
  \label{fig:branchsamples}
\end{figure*}

%The structures of \(E\) along resonant branches, before intersecting other branches, are fully polygonal structures similar to that in Fig.~\ref{fig:57resonance}. Each branch corresponds with internal rotation of cells in \(\tilde{E}\) by some rational fraction of \(\pi\). The structures along two of these branches, the branch originating from \(\alpha = 2\pi / 3\) and the branch originating from \(\alpha = \pi / 5\) are shown in Fig.~\ref{fig:branchsamples}. Before intersection with another Arnold tongue, these structures are fully polygonal but change significantly after intersection. The \(2j+1=3\) branch likewise includes small triangles.

The branches originating from \(\pi/3\) and \(2\pi/3\) in Fig.~\ref{fig:full-flame} are unique in that they intersect each other, their own complements, before other branches as \(\gamma\) increases. Furthermore, they continue on as polygonal tilings until their intersection with the branches corresponding to \(4\pi/5\) and \(\pi/5\), respectively (see Fig.~\ref{fig:branchsamples} for the intersection of the \(\pi/5\) branch with the \(2\pi/3\) branch). Approaching this intersection, the shortest-period cells, the period-2 triangles (the large triangles for \(\gamma = 90^\circ\) and the four triangles at 2, 5, 8, and 10 o'clock positions at \(\gamma = 100^\circ\) in Fig.~\ref{fig:branchsamples}), are annihilated \cite{Smith2017} and transition to period-5 triangles at \(\gamma = 104.478...^\circ\), but remain polygonal tilings up to \(\gamma = 108^\circ\). Continuing along the \(2\pi/3\) branch, after the intersection with the \(\pi/5\) branch at \(\gamma = 108^\circ\), the period-5 triangles are retained as polygonal cells but not as a polygonal tiling (see for example the \(\tilde{E}\) structure at \(\gamma = 109^\circ\) along the \(2\pi/3\) branch in Fig.~\ref{fig:branchsamples}).
Similarly, moving upward from \(\gamma = 90^\circ\) along the \(\pi/5\) branch by increasing \(\gamma\), polygonal tilings with period-3 pentagons are retained until the intersection with the \(2\pi/3\) branch, after which the tiling disappears but the pentagons remain. These pentagons are retained until \(\gamma = 123.988...^\circ\) after which they disappear.
Thus, \((2j+1)\)-gons persist for a short distance past their intersection with the \(\pi/3\) and \(2\pi/3\) branches before disappearing along each branch even though the polygonal tilings do not (with the exception of the \(\pi/3\) and \(2\pi/3\) branches themselves before they intersect the \(4\pi/5\) and \(\pi/5\) branches, respectively).

% along the j=2 branch, pentagons cease at gamma = 123.987843581479 and alp = bet = 82.3089987931134
% along the j=1 branch, period-2 triangles cease at gamma = 104.477512185930 and alp = bet = 70.5287793655093 and are replaced by period-5 triangles in the same locations plus more

\subsection{Other features in the symmetry plane}

The cathedral-like structure at the lower center of Fig.~\ref{fig:full-flame}(a), i.e.\ \(\gamma < 90^\circ\) and \(\alpha =\beta\) close to 90, is filled with the resonance curves that result when Arnold tongues intersect one of their symmetric mirror images (mirrored across \(\alpha + \beta = 180^\circ\)). For example, the intersections of short-dashed lines with long-dashed lines of the same color (blue with blue or black with black) in Fig.~\ref{fig:phasespaces}(d) lie on the same cathedral-like region in the \(\alpha + \beta = 180^\circ\) symmetry plane. Two symmetries of the system, specifically Eqs.~\ref{eq:symm-beta-gamma} and \ref{eq:symm-alpha-gamma}, reveal the symmetric equality of the \(\alpha + \beta = 180^\circ\) and \(\alpha = \beta\) symmetry planes. This allows us to connect the \(\alpha + \beta = 180^\circ\) plane shown for \(\gamma \ge 90^\circ\) in Fig.~\ref{fig:phasespaces},  which is where these particular tongue intersections occur, with  the \(\gamma \le 90^\circ\) region in Fig.~\ref{fig:full-flame}.

%The intersections of these branches with one another results in the termination of the resonant behavior of the branch and corresponds to the location of the resonant structures that spread from the \(\alpha = \beta\) symmetry plane that were noted in the discussion of Fig.~\ref{fig:phasespaces}. Despite the branches no longer being fully resonant past an intersection, intersections of branches continue to be resonant points.

As noted earlier, 500 iterations is not sufficient to fully complete most of the structures of \(\tilde{E}\). This relatively low value is used to widen the resonant curves in this protocol space by not giving nearly resonant protocols the iterations necessary to fill in. This has the side-effect of creating the dark corners at the top of Fig.~\ref{fig:full-flame} that contain protocols for which \(\Phi_{\varepsilon,N}\) grows slowly but would approach \(\Phi = 1\) if \(N\) were large enough due to small atom sizes. Other regions in the protocol space have dark specks [most easily visible at the lower left corner and lower right corner of Fig.~\ref{fig:full-flame}(a)], which is also a result of protocols with at least one region of slowly growing coverage. These anomalies disappear at higher iteration counts, but using more iterations obscures the dark regions around resonances. Thus, apart from these specks and the dark corners, the overall structure of the patterns in the protocol space are not affected by the low number of iterations used in constructing Fig.~\ref{fig:full-flame}.

The remainder of the protocol space in Fig.~\ref{fig:full-flame}, along the \(\alpha = \beta\) symmetry plane but outside of the resonant curves, has complex structure that is not yet understood. For example, a set of prominent features in Fig.~\ref{fig:full-flame} are the dark fingers reaching up from branch intersections which correspond with the dark bands between Arnold tongue intersections mentioned earlier and which are prominent in Fig.~\ref{fig:phasespaces}(c). The protocol \((65.5^\circ, 65.5^\circ, 120^\circ)\) labeled \(A\) lies on one of these fingers. An analysis similar to that performed by Smith et al.\ \cite{Smith2017} shows that these dark fingers are the result of periodic points for large cells lying equidistant from two size-limiting cutting lines.

%The internal rotation of cells by a rational fraction of \(\pi\) along resonant curves survives briefly past the intersection of the two branches resulting in some polygonal cells, but the fully polygonal structure does not persist. This results in \(\tilde{E}\) structures with large polygonal cells but where the rest of \(\tilde{E}\) is relatively well filled.\footnote{Needs more detail walking through these two branches.}

Although the focus here has been on the \(\alpha = \beta\) symmetry plane, there are many curves of resonant structures throughout the protocol space (i.e., for \(\alpha \neq \beta\)). However, this is beyond the scope of this study.

\subsection{Changing \(\gamma\) to increase average coverage}

There is an overall increase in coverage (lighter colors) as \(\gamma\) is increased or decreased from \(90^\circ\). Figure~\ref{fig:average_coverage} shows \(\Phi_{0.01,2\times 10^4}\) averaged across \( 0 \le \alpha,\beta \le 180^\circ\) as \(\gamma\) is varied. There is a clear minimum at \(\gamma = 90^\circ\), and \(\Phi\) generally grows away from \(\gamma = 90^\circ\). A sudden decrease near \(\gamma = 0\) and \(\gamma = 180^\circ\) where PWIs become rotations about a single axis is an artifact of sampling \(\Phi\) on a regular grid and using finite iterations. \(\Phi = 1\) for almost every protocol when \(\gamma = 0\) and \(\gamma = 180^\circ\) and the expected trend is shown with a dotted line. The overall trend agrees with the qualitative results in Fig.~\ref{fig:change-gamma} where, even though \(\Phi\) does not increase strictly monotonically (due to the existence of a polygonal tiling with zero coverage shown in Fig.~\ref{fig:57resonance}), the general trend of increasing coverage as \(\gamma\) is increased from \(90^\circ\) holds. Noting the scale on the \(\Phi\) axis, most protocols produce exceptional sets with fairly high coverage. This trend is apparent when comparing the average intensity as \(\gamma\) is increased in Fig.~\ref{fig:phasespaces}. One should note that this trend does not hold for the \(\alpha = \beta\) plane, Fig.~\ref{fig:full-flame}(a), in isolation. The local maxima around \(\gamma = 71^\circ\) and \(\gamma = 109^\circ\) are not well understood but may have a connection to the intersection of polygonal resonant branches that occurs at \(\gamma = 72^\circ\) and \(\gamma = 108^\circ\).

\begin{figure}
  \includegraphics[width = 0.5\textwidth]{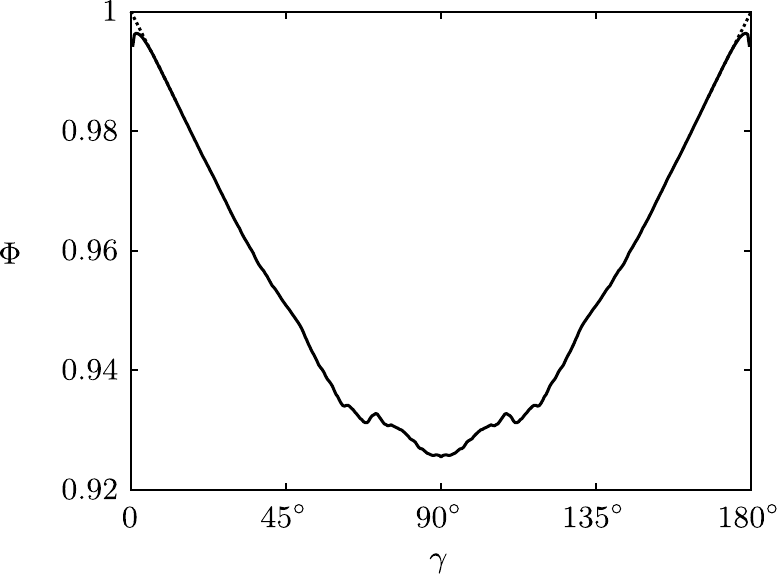}
  \caption{\(\Phi_{0.01,2\times 10^4}\) averaged across \( 0 \le \alpha,\beta \le 180^\circ\) vs. \(\gamma\). \(\Phi\) increases on average for axis arrangements further from orthogonal. 
  }
  \label{fig:average_coverage}
\end{figure}

\subsection{Atom size and shape determine Arnold tongue locations}

At a more fundamental level, when the areas of atoms (Eqs.~\ref{eq:area_p1}--\ref{eq:area_p4}) are compared, the corresponding curves along which two atoms are of equal area match the locations of the Arnold tongues originating from \((90^\circ, 0, \gamma)\) and \((0,90^\circ, \gamma)\) [short-dashed blue curves in Fig.~\ref{fig:phasespaces}(b-f)] or \((90^\circ, 180^\circ, \gamma)\) and \((180^\circ,90^\circ, \gamma)\) [long-dashed blue curves in Fig.~\ref{fig:phasespaces}(d)] exactly. Furthermore, these equal area curves match the blue dashed lines in Fig.~\ref{fig:phasespaces}(a) when \(\gamma = 90^\circ\). For example, for \(\gamma = 120^\circ\), these curves of equal atom size are shown in Fig.~\ref{fig:equal_atom_size_120}. Atoms \(P_1\) and \(P_4\) are only of equal size when \(\alpha + \beta = \pi\). Likewise, atoms \(P_2\) and \(P_3\) are only of equal size when \(\alpha = \beta\). The other lines correspond to \(A_2 = A_1\) (\(\alpha = \gamma'\))  and \(A_2 = A_4\) (\(\beta = \pi - \gamma'\)) in blue and \(A_3 = A_1\) (\(\beta = \gamma'\)) and \(A_3 = A_4\) (\(\alpha = \pi - \gamma'\)) in red. The smallest atom for each protocol, which often limits the mixing \cite{Smith2017}, is labeled in each sub region of Fig.~\ref{fig:equal_atom_size_120}. For orthogonal axes, there are no regions where \(P_2\) or \(P_3\) are the smallest atoms. This raises the question as to how much influence diversity in atom size has on mixing and how much information about mixing can be gained simply by examining the size and shape of the PWI atoms, which is a topic for future work.

\begin{figure}
  \includegraphics[width = 0.5\textwidth]{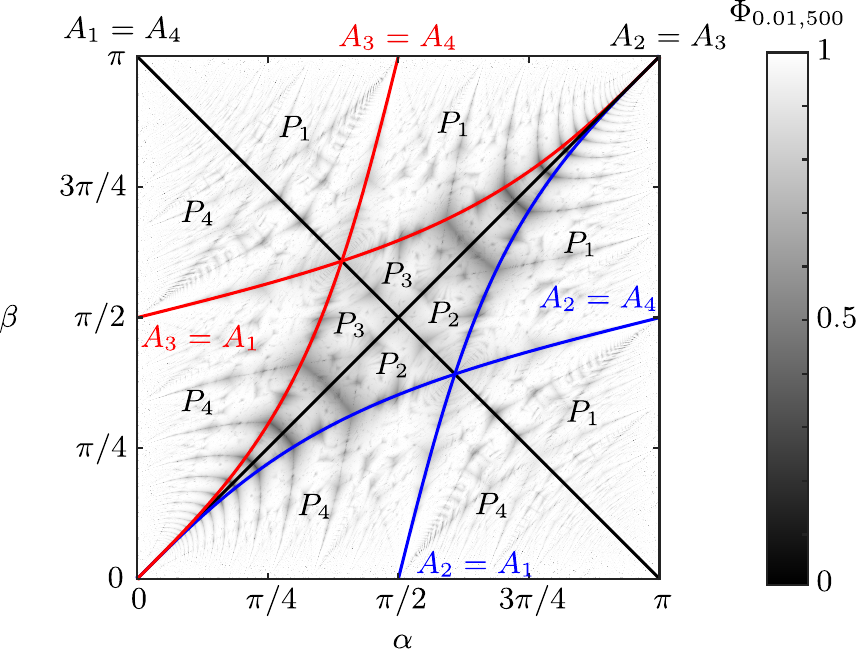}
  \caption{Curves of equal atom size (Eqs.~\ref{eq:area_p1}--\ref{eq:area_p4}) for \(M_\abg\) where \(\gamma = 120^\circ\). The smallest atom, \(P_i\), is labeled inside of each region between curves.
  }
  \label{fig:equal_atom_size_120}
\end{figure}

\section{Conclusions \label{sec:conclusions}}

Introducing non-orthogonal axes to the PWI on a hemispherical shell adds a new dimension to the parameter space that greatly expands possible cutting-and-shuffling behaviors. To explore this vast domain, a method for rapid generation of an approximation of the exceptional set was developed. The approach described here is computationally efficient for use with parallel computing. Non-orthogonal axes expand the regions in parameter space where certain periodic points (responsible for non-mixing islands) can exist while allowing for new periodic points that are not possible in the orthogonal axes case. Some of the protocols with non-orthogonal axes have more coverage than their corresponding orthogonal axes protocols while others have less. On average, coverage of the hemispherical shell by the exceptional set is increased by using non-orthogonal axes.

When examining the protocol space for the fractional coverage of \(\tilde{E}\), various branches of locally resonant protocols traverse the protocol space. The most prominent appear along symmetry planes, one of which, \(\alpha = \beta\), is investigated thoroughly and contains many protocols with polygonal cells, including protocols whose cells form a polygonal tiling of the entire hemispherical shell. Arnold tongues in the protocol space identify low coverage protocol regions. Intersections of symmetric pairs of Arnold tongues (along symmetry planes) indicate locations of the polygonal exceptional sets. Some symmetries found exclusively in the orthogonal axes case are broken when considering non-orthogonal axes allowing increased transport and a wide variety of unique PWIs.

This paper only just hints at the possible behaviors for non-orthogonal axes. For example, an investigation of the relationship between atom size and shape following up on the cursory results shown in Fig.~\ref{fig:equal_atom_size_120} could shed more light on the cause of low coverage resonances. An extension of this PWI to the entire unit sphere, eliminating periodic boundaries, outlined in Appendix~\ref{sec:extended-pwi}, could also be a fruitful topic for future study. Additionally, the observation  that coverage of the exceptional set \(\Phi\) does not always correlate well with mixing, such as in Fig.~\ref{fig:tax_orthog}, leads to questions about barriers to mixing within the exceptional set itself. Similar barriers to transport within the exceptional set have been observed by Ashwin et al. \cite{Ashwin2018}. However, their occurrence and dynamics have not been fully explained yet.

\begin{acknowledgments}
This material is based upon work supported by the National Science Foundation under grant no.\ CMMI-1435065.
\end{acknowledgments}

%%%%%%%%%%%%%%%%%%%%%%%%%%%%%%%%%%%%%%%%%%%%%%%%%%%%%%%%%%%%%%%%%%%%%%%%%%%%%%%%%%
%%%%%%%%%%%%%%%%%%%%%%%%%%%%%      APPENDICES     %%%%%%%%%%%%%%%%%%%%%%%%%%%%%%%%
%%%%%%%%%%%%%%%%%%%%%%%%%%%%%%%%%%%%%%%%%%%%%%%%%%%%%%%%%%%%%%%%%%%%%%%%%%%%%%%%%%
\appendix
\section{Extending the PWI mapping to the entire sphere \label{sec:extended-pwi}}
The PWI here and previously \cite{Park2016,Park2017,Smith2017} is restricted the domain to the hemispherical shell, but this is an unnecessary restriction. The behavior of this PWI can be easily expanded to the full sphere by simply changing rotation direction based on the sign of the \(y\)-value of the point before rotation about the \(y\)-axis. This change brings the PWI system very close in form to a simpler system studied previously \cite{Scott2001,Scott2003} and allows us to do away with modular rotations or additional rotations by \(\pi\) and instead produce a compact and complete view of the PWI over the entire sphere. This system maintains all symmetries derived in Appendix~\ref{sec:symmetries}.

First, we rewrite the rotation procedure in a slightly different format that is mathematically identical to Eq.~\ref{eq:mapping} \cite{Smith2017}. After rotation by \(\alpha\) about the \(z\)-axis, rotate the HS by \(-\gamma\) about the \(y\)-axis to align the second axis with the \(z\)-axis before the second rotation by \(\beta\). Rotate by \(\beta\) about the \(z\)-axis and then rotate about the \(y\)-axis by \(\gamma\) to restore the original axial positions. The mapping can then be written as
\begin{equation}
  M_\abg = R^y_{\gamma} \tilde{M}^z_{\beta} (R^y_{\gamma})^{-1} \tilde{M}^z_\alpha, \label{eq:gamma_pwi}
\end{equation}
where \(R^{a}_{\theta}\) is a rotation about axis \(a\) by \(\theta\) and \(\tilde{M}\) is a ``modular'' rotation (points rotated above the equator, \(y > 0\), are rotated by an additional \(\pi\)) such that \(\tilde{M}^z_{\theta + n\pi} = \tilde{M}^z_{\theta}\).

Between any two modular flips where the system rotates by \(R^z_{\pi}\), the equivalent full-sphere system has inverted \(y\) rotations and unchanged \(z\) rotations. This is easily shown by considering the compositions of the modular flip rotations, \(R_\pi^z\), with rotations about the \(z\)- and \(y\)-axes,
\begin{eqnarray}
  R^z_{\pi} R^z_{\theta} &=& R^z_{\theta + \pi} \nonumber \\
  &=& R^z_{\theta} R^z_{\pi} \label{eq:z_pass_z}\\
  R^z_{\pi} R^y_{\theta} &=& (S_{yz}S_{xz}) R^y_{\theta} \nonumber \\
  &=& S_{yz} R^y_{\theta} S_{xz} \nonumber \\
  &=& (R^y_{\theta})^{-1} S_{yz}S_{xz} \nonumber \\
  &=& (R^y_{\theta})^{-1} R^z_{\pi}. \label{eq:z_pass_y}
\end{eqnarray}
Equation~\ref{eq:z_pass_z} shows that the modular flips, \(R_\pi^z\), commute with rotations about the \(z\)-axis. Equation~\ref{eq:z_pass_y} shows that interchanging the order of a rotation about the \(y\)-axis with \(R_\pi^z\) is possible, but inverts the \(y\)-axis rotation. Using this, any two modular flips can annihilate one another by shifting them together in the rotation order until they collide, inverting \(y\)-axis rotations as they move.\footnote{If there is an odd number of modular flips for a given point, there will be an unpaired \(R_\pi^z\) left over indicating that the final position of the point lies on the opposite hemisphere (upper or lower) than its starting position and would need to be flipped over one more time to return to the starting hemisphere. We ignore this correction and allow points to end up on the opposite hemisphere since our goal is to extend the PWI to the full sphere.} Rotations that were between pairs of \(R_\pi^z\) (inverting \(y\)-axis rotations due to Eq.~\ref{eq:z_pass_y}) represent actions on points that remain in the upper hemisphere (i.e.\ \(y>0\)) which is outside of the lower hemispherical domain used in this paper and earlier \cite{Park2016,Park2017,Smith2017}. Points that remain in the lower hemisphere have unchanged \(y\)-axis rotations. As such, the signum function can be used on the \(y\) coordinate to dictate which direction rotations about the \(y\)-axis should occur.

The new PWI system is as follows:
\begin{equation}
  M_\abg  = (R^{y}_{\gamma \, \sign(y)})^{-1} R^z_\beta R^{y}_{\gamma \, \sign(y)} R^z_\alpha,
\end{equation}
where \(\sign(y)\) is evaluated upon application of that particular rotation, and the signum function is defined here as
\begin{equation}
  \sign (y) = 
  \left\{
    \begin{array}{rl}
      1, & y \ge 0 \\
      -1, & y \le 0
    \end{array}
  \right. \nonumber
\end{equation}
such that the rotation about \(y\) provides cutting behavior when the rotated point falls onto the equator, which was \(\partial S\) for the hemisphere, and the signum function is multi-valued at \(y=0\), mirroring the multi-valued nature of \(\partial S\) in \(M_\abg\). The treatment of \(y=0\) in the signum function changes dynamics within \(E\), but does not affect the behavior outside of \(E\) including \(\bar{E} \setminus E\). Treating \(y=0\) as a multi-valued discontinuous set maintains dynamics within \(E\) although dynamics within \(E\) are not considered in this paper.

This new system can be described as rotation about two axes where in between rotation about an axis, the top and bottom hemispheres are twisted about the \(y\)-axis in opposite directions. The equator in this system still acts as a cutting line in this regard so the system will maintain identical structures for \(E\) but copied on both hemispheres and with doubled periods for some periodic points.

When formulated this way, it is easy to see that the system studied by Scott et al. \cite{Scott2001, Scott2003} is equivalent to a hemispherical shell with periodic boundary that is rotated by a constant amount, \(\omega\), about an axis of rotation that itself rotates in the equatorial plane after each rotation by a constant amount, \(\mu\).

\section{Proof of finite \(N\) coverage of \(\bar{E}\) \label{sec:fat_coverage_proof}}

\noindent \textbf{Claim:} For all \(\varepsilon>0\), and protocol parameters \(\abg\), there exists \(N=N(\varepsilon,\abg)<\infty\) such that 
\begin{equation}
E \subset \bar{E} \subset \tilde{E}_{\varepsilon,N} = \bigcup_{n=0}^N M_{\alpha,\beta,\gamma}^n \mathcal{D}_\varepsilon \subset E_\varepsilon.
\end{equation}
This is the claim made in Eq.~\ref{eq:fat_coverage}. We use \(M\) as a shorthand for \(M_\abg\) for brevity here although this result is valid for any invertible PWI.

\begin{proof}
Fix \(\delta > 0\), and let \(P\) be a finite collection of \(\delta\)-balls, centered on points in \(S\), whose union contains \(S\), i.e.\ \(P=\{ B_\delta(\bm{x}) : \bm{x}\in S\}\), \(|P|<\infty\), and \(\bigcup_{U\in P} U \supset S\).

Let \(Q=\{U\in P : U\cap \bar{E} \neq \emptyset\}\), so \(|Q|<\infty\) and \(\bigcup_{U\in Q} U \supset \bar{E}\).  

Take any \(\bm{x} \in \bar{E}\), then there exists \(U=B_\delta(\bm{u})\in Q\) such that \(\bm{x} \in U\). We will show that \(\bm{x} \in M^{N_U} \mathcal{D}_\varepsilon\) for some \(N_U < \infty\) \footnote{This is similar in spirit to Poincar\'e recurrence, in the sense that points in \(E_\varepsilon\) will return to within \(\varepsilon\) of \(\mathcal{D}\), i.e.\ \(\mathcal{D}_\varepsilon\), in a finite number of iterations.}. Suppose that \(U\cap E \neq \emptyset\). Let \(\bm{y} \in E\) be the first element of \(U\) that is `hit' by some element of \(\mathcal{D}\), i.e.\ there exists \(N_U\) such that \(M^{-N_U}(\bm{y}) \in \mathcal{D}\), and \(M^{-i}(\bm{a}) \notin \mathcal{D}\) for all \(\bm{a} \in U\) and \(0\leq i < N_U\). The last statement is equivalent to saying that \(M^{-i} U \cap \mathcal{D} = \emptyset\) for all \(0\leq i < N_U\). Therefore, mapping backward in time for \(N_U\) iterations, the set \(U\) does not intersect a cutting line, and so will not be cut. Furthermore, aside from the cuts, the map \(M\) is an isometry, meaning it preserves distances. Now let \(\delta = \varepsilon/2\). Since \(\bm{x},\bm{y} \in U = B_{\varepsilon/2}(\bm{u})\), \(\bm{x}\) and \(\bm{y}\) can be at most \(\varepsilon\) apart, i.e.\ \(\|\bm{x}-\bm{y}\|<\varepsilon\). Since \(M\) is an isometry, \(\|M^{-N_U}(\bm{x}) - M^{-N_U}(\bm{y}) \|=\|\bm{x}-\bm{y}\|<\varepsilon\), so \(M^{-N_U}(\bm{x}) \in B_\varepsilon(M^{-N_U}(\bm{y}))\subset \mathcal{D}_\varepsilon\) and hence \(\bm{x} \in M^{N_U}(\mathcal{D}_\varepsilon)\). Therefore, \(\bm{x} \in M^{N_U} \mathcal{D}_\varepsilon\) as desired. 

Suppose \(U\cap E = \emptyset\), i.e.\ \(\bm{x}\) is a limit point, not an interior point, of \(E\). Let \(r=\|\bm{x}-\bm{u}\|<\delta\). Since \(\bm{x}\) is a limit point of \(E\), for all \(\eta>0\) there exists a point \(\bm{y}\in E\cap B_\eta (\bm{x})\). Specifically, for \(\eta=\delta-r\), there exists \(\bm{y}\in E\cap B_{\delta-r} (\bm{x})\). Therefore, \(\bm{y} \in E\) and
\begin{multline}
\|\bm{y} - \bm{u}\|= \|\bm{y} - \bm{x} + \bm{x} - \bm{u}\| \\ \leq \|\bm{y} - \bm{x}\| + \|\bm{x} - \bm{u}\| < (\delta-r) +r = \delta,
\end{multline}
i.e.\ \(\bm{y} \in U\cap E\), which contradicts \(U\cap E = \emptyset\). Therefore, the only possibility is that \(U\cap E \neq \emptyset\). 

Let \(N = \max_{U\in Q} N_U\), which is finite because \(|Q|<\infty\) and \(N_U<\infty\) for each \(U\in Q\). It follows that for all \(\bm{x} \in \bar{E}\), \(\bm{x} \in U\) for some \(U \in Q\), and \(\bm{x} \in M^{N_U} \mathcal{D}_\varepsilon \subset  \bigcup_{n=0}^N M_{\alpha,\beta,\gamma}^n \mathcal{D}_\varepsilon\).
\end{proof}

\section{Symmetries in the protocol space of the \(M_\abg\) mapping \label{sec:symmetries}}

Following the general analysis of Smith et al.\ \cite{Smith2017}, the mapping \(M_\abg\) in Eq.~\ref{eq:mapping} can be rewritten using only modular rotations about the \(z\)-axis and rotations about the vertical \(y\)-axis (refer to Fig.~\ref{fig:pwi-demo} for an example of the relevant coordinate system) to more easily find symmetries within the protocol space of the BST PWI based on symmetries of single axis modular rotation. This has been done above in Eq.~\ref{eq:gamma_pwi}.

Since rotations about the \(z\)-axis are modular, they are periodic in the protocol space with period \(\pi\) and all unique behavior is captured in the region \( 0 \le \alpha \le \pi\) and \(0\le \beta \le \pi\). Likewise, rotation about the \(y\)-axis is periodic in the protocol space with period \(2\pi\) and all unique behavior in the protocol space is within \(0\le \gamma \le 2\pi\). Note that \((R^y_\theta)^{-1} =R^{-y}_\theta = R^y_{-\theta} \) are all equivalent inverses of \(R^y_\theta\).

Taking the labels \(i,j,k\) to represent a set of orthogonal axes (e.g., \(x,y,z\)), the following key identities for rotations about a single axis will be extended to the PWI mapping,
\begin{align}
  R^i_\theta &= S_{jk}R^i_\theta S_{jk}, \label{eq:reflection-constant-symm}\\
  R^i_\theta &= S_{ij} (R^i_\theta)^{-1} S_{ij}, \label{eq:reflection-reversal-symm}\\
  S_{ij}S_{ik} = S_{ik}S_{ij} &= R^i_\pi = R^i_{-\pi}, \label{eq:reflection-pi}
\end{align}
where \(S_{ij}\) is a reflection across the \(ij\)-plane (e.g.~\(S_{xy}\) reflects across the \(xy\)-plane, \(z\mapsto -z\)). Equation~\ref{eq:reflection-constant-symm} is a rotation that is unchanged under reflection across the plane orthogonal to the rotation axis , it also provides that the rotation \(R_\theta^i\) and reflection \(S_{jk}\) commute. Equation~\ref{eq:reflection-reversal-symm} is a reflection-reversal symmetry where the rotation is reversed under reflection across a plane containing the rotation axis. Equation~\ref{eq:reflection-pi} simply states that reflection about two perpendicular planes is identical to rotation by \(\pi\) about the axis formed by the planes' intersection.

These identities can also be trivially extended to a few corollaries, Eqs.~\ref{eq:corollary1}--\ref{eq:corollary4}, which will be used liberally in the derivation of PWI symmetries,
\begin{align}
  S_{jk} R^i_\theta &= S_{jk} R^i_\theta (S_{jk} S_{jk}) = R^i_\theta S_{jk}, \label{eq:corollary1} \\
  S_{ij} R^i_\theta &= S_{ij} R^i_\theta (S_{ij} S_{ij}) = (R^i_\theta)^{-1} S_{ij}, \\
  S_{ij} &= S_{ij} (S_{ik} S_{ik}) = R^i_\pi S_{ik}, \\
  S_{ij} &= S_{ij} (S_{jk} S_{jk}) = R^j_\pi S_{jk}. \label{eq:corollary4} 
\end{align}

We wish to first extend these identities to the single modular rotation operator \(\tilde{M}^z_\theta\). For most identities, this is trivial, except for the following,
\begin{equation}
  \tilde{M}^z_\theta = S_{xz}(\tilde{M}^z_\theta)^{-1} S_{xz}.
\end{equation}
\(S_{xz}\) maps the lower hemisphere to the upper hemisphere, where \(\tilde{M}\) is undefined. \(\tilde{M}\) could be carefully redefined to provide an additional rotation by \(\pi\) whenever a point crosses the equator (i.e., providing an additional rotation by \(\pi\) when moving from the upper to the lower hemisphere in addition to when moving from the lower to the upper) instead of the simple modular operation at the equator to allow the use of symmetries involving \(S_{xz}\). For the sake of clarity, this identity will not be used (and is not needed) in any of the following derivations.

\begin{figure}
  \includegraphics[width = 0.45\textwidth]{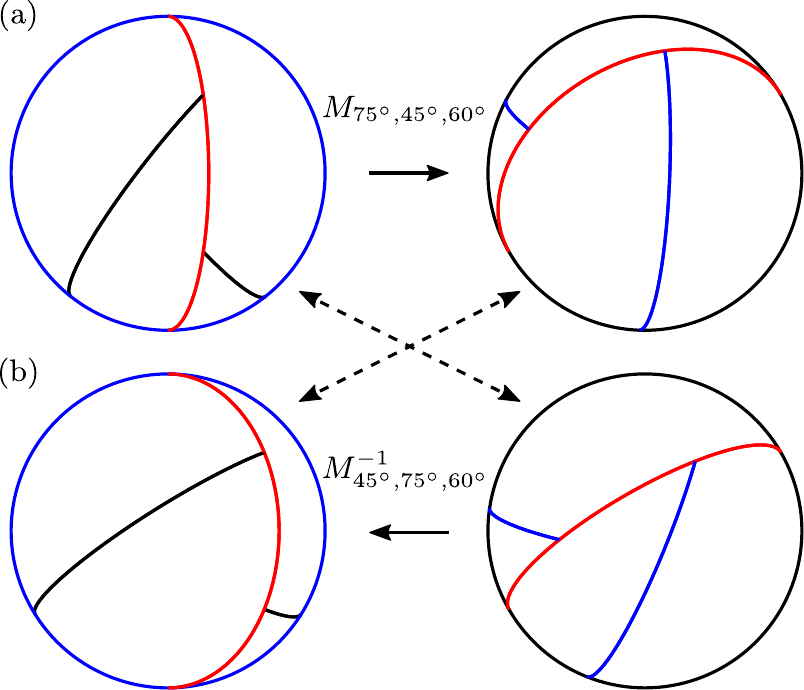}
  \caption{(a) PWI for \(M_{75^\circ,45^\circ,60^\circ}\). (b) PWI for \(M_{45^\circ,75^\circ,60^\circ}\) with the inverse direction indicated (the initial condition for the inverse map is on the right). Compare the starting (or ending) positions of \(M_{75^\circ,45^\circ,60^\circ}\) and \(M^{-1}_{45^\circ,75^\circ,60^\circ}\) (marked with the dashed arrows) to see symmetry between the two protocols indicated in Eq.~\ref{eq:symm-reverse-time}.}
  \label{fig:symm-reverse-time}
\end{figure}

Symmetry between \(M_\abg\) and \(M^{-1}_{\beta,\alpha,\gamma}\) can be seen by applying Eq.~\ref{eq:reflection-reversal-symm} to both \(\tilde{M}^z\) and then \(R^y\),
\begin{align}
  M_\abg
  &= R^y_\gamma \tilde{M}^z_\beta R^{-y}_\gamma \tilde{M}^z_\alpha \nonumber \\
  &= R^y_\gamma (S_{yz} \tilde{M}^{-z}_\beta S_{yz}) R^{-y}_\gamma (S_{yz} \tilde{M}^{-z}_\alpha S_{yz}) \nonumber \\
%  &= R^y_\gamma S_{yz} \tilde{M}^{-z}_\beta R^y_\gamma \tilde{M}^{-z}_\alpha (R^{-y}_\gamma R^y_\gamma) S_{yz} \nonumber \\
  &= (R^y_\gamma S_{yz}) M^{-1}_{\beta,\alpha,\gamma} (R^y_\gamma S_{yz}). \label{eq:symm-reverse-time}
\end{align}
Thus there is a symmetry about \(\alpha = \beta\) for the forward and reverse mappings for all \(\gamma\). This means that \(\bar{E}\) and the locations of cells (both of which are invariant) will be identical under \(M_\abg\) and \(M_{\beta,\alpha,\gamma}\) up to the reflection \(R^y_\gamma S_{yz}\), an involution. For all of these symmetries, it can shown that the reflections and rotations that bookend the mappings on either side are all involutions indicating these symmetries hold for any number of iterations. Therefore, the fractional coverage of \(\bar{E}\), \(\Phi\), for both \(M_\abg\) and \(M_{\beta,\alpha,\gamma}\) are equivalent. Thus, there is, in the protocol space explored in Section~\ref{sec:phase_space}, a symmetry across \(\alpha = \beta\) (\(\alpha - \beta = n\pi\) when the periodicity rotations about the \(z\)-axis is taken into account). As a result, this reflection symmetry in the protocol space restricts the protocols for which there is unique mixing behavior to those where either \(\alpha \le \beta\) or \(\beta \le \alpha\). An example case of this symmetry in shown in Fig.~\ref{fig:symm-reverse-time} where \(M_{75^\circ,45^\circ,60^\circ}\) and \(M^{-1}_{45^\circ,75^\circ,60^\circ}\) are compared. The initial (and final) positions of \(M_{75^\circ,45^\circ,60^\circ}\) and \(M^{-1}_{45^\circ,75^\circ,60^\circ}\) differ by a rotation and a reflection as expected.

Equation~\ref{eq:reflection-reversal-symm} can be applied again, with slight variation, to get a symmetry between \(M_\abg\) and \(M_{-\alpha,-\beta,-\gamma}\) [shown between Fig.~\ref{fig:combined_symmetries}(a) and (b)],
\begin{align}
  M_\abg
  &= R^y_\gamma \tilde{M}^z_\beta R^{-y}_\gamma \tilde{M}^z_\alpha \nonumber \\
  &= (S_{yz} S_{yz}) R^y_\gamma (S_{yz} \tilde{M}^{-z}_\beta S_{yz}) R^{-y}_\gamma (S_{yz} \tilde{M}^{-z}_\alpha S_{yz}) \nonumber \\
%  &= S_{yz} R^{-y}_\gamma \tilde{M}^{-z}_\beta R^y_\gamma \tilde{M}^{-z}_\alpha S_{yz} \nonumber \\
  &= S_{yz} M_{-\alpha,-\beta,-\gamma} S_{yz}. \label{eq:symm-negate-all}
\end{align}

Noting that rotation about the \(y\)-axis is periodic with period \(2\pi\) and applying Eq.~\ref{eq:reflection-constant-symm} reveals a symmetry across \(\gamma = n\pi\) [shown between Fig.~\ref{fig:combined_symmetries}(a) and (c)],
\begin{align}
  M_\abg
  &= R^y_\gamma \tilde{M}^z_\beta R^{-y}_\gamma \tilde{M}^z_\alpha \nonumber \\
  &= (S_{xy} S_{xy}) R^y_\gamma (S_{xy} \tilde{M}^{z}_\beta S_{xy}) R^{-y}_\gamma (S_{xy} \tilde{M}^{z}_\alpha S_{xy}) \nonumber \\
%  &= S_{xy} R^{-y}_\gamma \tilde{M}^{z}_\beta R^y_\gamma \tilde{M}^{z}_\alpha S_{xy} \nonumber \\
  &= S_{xy} M_{\alpha,\beta,-\gamma} S_{xy}. \label{eq:symm-negate-gamma}
\end{align}
Since behavior is reflected across \(\gamma = \pi\) in the protocol space, the protocols producing unique PWIs are further restricted to \(0 \le \gamma \le \pi\).

Applying both Eqs.~\ref{eq:symm-negate-all} and \ref{eq:symm-negate-gamma} demonstrates the symmetry about \(\alpha = \beta = 0\) in the protocol space [shown between Fig.~\ref{fig:combined_symmetries}(a) and (d)],
\begin{align}
  M_\abg
  &= S_{xy} M_{\alpha,\beta,-\gamma} S_{xy} \nonumber \\
  &= S_{xy} ( S_{yz} M_{-\alpha,-\beta,\gamma} S_{yz} ) S_{xy} \nonumber \\
  &= R^y_\pi M_{-\alpha,-\beta,\gamma} R^y_\pi. \label{eq:symm-negate-two}
\end{align}
Noting that the rotations about the \(z\)-axis are periodic with period \(\pi\) and there is an existing reflection symmetry across \(\alpha - \beta = 0\) (Eq.~\ref{eq:symm-reverse-time}),
%, reflection symmetry exists in the protocols space whenever \(\alpha = n\pi - \alpha\) and \(\beta = m\pi - \beta\) which simplify to \(\alpha = n\pi/2\) and \(\beta = m\pi/2\). This amounts to reflections across \(\alpha = \pi/2\) and \(\beta = \pi/2\) which together form a rotational symmetry by \(\pi\) about \((\pi/2, \pi/2, \gamma)\). Since there is a reflection symmetry along \(\alpha = \beta\), 
this is a reflection symmetry across \(\alpha + \beta = \pi\) in the protocol space. The region containing unique PWI behavior is then further reduced by this symmetry plane to the wedge \( 0 \le \alpha \le \pi/2\), \(0 \le \beta \le \min(\alpha, \pi - \alpha)\), and \(0 \le \gamma \le \pi\).

Equation~\ref{eq:reflection-reversal-symm} can be applied to a single rotation to show the symmetry relating \(\beta\) and \(\gamma\) [shown in Fig.~\ref{fig:combined_symmetries}(a)],
\begin{align}
  M_\abg
  &= R^y_\gamma \tilde{M}^z_\beta R^{-y}_\gamma \tilde{M}^z_\alpha \nonumber \\
  &= R^y_\gamma [S_{yz} (S_{xy} S_{xy}) \tilde{M}^{-z}_\beta (S_{xy} S_{xy}) S_{yz}] R^{-y}_\gamma \tilde{M}^z_\alpha \nonumber \\
%  &= R^y_\gamma R^y_{-\pi} S_{xy} \tilde{M}^{-z}_\beta S_{xy} R^y_\pi R^{-y}_\gamma \tilde{M}^z_\alpha \nonumber \\
%  &= R^y_{\gamma-\pi} \tilde{M}^{-z}_\beta R^{-y}_{\gamma-\pi} \tilde{M}^z_\alpha \nonumber \\
  &= M_{\alpha,-\beta,\gamma - \pi}. \label{eq:symm-beta-gamma}
\end{align}
This symmetry combined with Eq.~\ref{eq:symm-negate-gamma} indicates a symmetry about \(\beta = n\pi/2\) and \(\gamma = \pi - \gamma + 2m\pi\) or \(\gamma = (m+\frac{1}{2})\pi\) where reflecting across \(\beta = \pi/2\) requires a reflection across \(\gamma = \pi/2\) as well, indicating a rotational symmetry (rotation by \(\pi\)) in the protocol space about \((\alpha, \pi/2, \pi/2)\). For the case when \(\gamma = \pi/2\), this is a reflection symmetry across \(\beta = \pi/2\) previously shown in Fig.~\ref{fig:phasespaces}(a) as a dashed red line. For \(\gamma = \pi/2\), this reduces the unique PWI behaviors to the triangular region \(0 \le \alpha \le \pi/2\) and \(0 \le \beta \le \alpha\). Extending to all values of \(\gamma\), this indicates that the PWI for \(\gamma \le \pi/2\) are rotationally symmetric in the protocol space to other PWI for \(\pi/2 \le \gamma\). This symmetry splits the remaining space for unique PWI in half through any plane that passes through \(\alpha, \pi/2, \pi/2\).

Equation~\ref{eq:symm-beta-gamma} can be extended to the \(\alpha\) direction by applying Eqs.~\ref{eq:symm-reverse-time} and \ref{eq:symm-beta-gamma} carefully [shown between Fig.~\ref{fig:combined_symmetries}(a) and (d)],
\begin{align}
  M_\abg
  %&= (R^y_\gamma S_{yz}) M^{-1}_{\beta,\alpha,\gamma} (R^y_\gamma S_{yz}) \nonumber \\
  %&= (R^y_\pi R^y_{-\pi}) (R^y_\gamma S_{yz}) M^{-1}_{\beta,-\alpha,\gamma-\pi} (R^y_\gamma S_{yz}) (R^y_\pi R^y_{-\pi}) \nonumber \\
  %&= R^y_\pi (R^y_{\gamma-\pi} S_{yz}) M^{-1}_{\beta,-\alpha,\gamma-\pi} (R^y_{\gamma-\pi} S_{yz}) R^y_{-\pi} \nonumber \\
  &= R^y_\pi M_{-\alpha,\beta,\gamma-\pi} R^y_{\pi}. \label{eq:symm-alpha-gamma}
\end{align}
Similar to the argument following Eq.~\ref{eq:symm-beta-gamma}, this results in a rotational symmetry due to two reflection symmetries across \(\alpha = n\pi/2\) and \(\gamma = (m+\frac{1}{2})\pi\).

\begin{figure}
  \includegraphics[width = 0.45\textwidth]{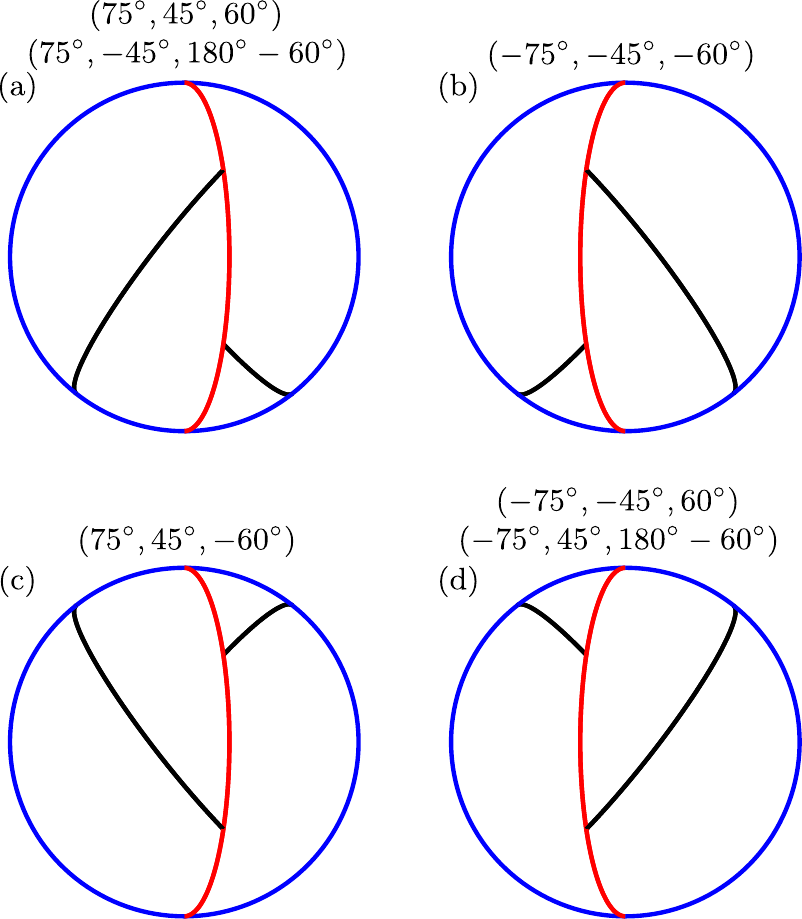}
  \caption{The initial conditions of symmetric protocols. (a) \((75^\circ,45^\circ,60^\circ)\) is identical to \((75^\circ,-45^\circ,180^\circ -60^\circ)\) by Eq.~\ref{eq:symm-beta-gamma}. (b) \((-75^\circ,-45^\circ,-60^\circ)\) is symmetric (flipped horizontally) to (a) by Eq.~\ref{eq:symm-negate-all}. (c) \((75^\circ,45^\circ,-60^\circ)\) is symmetric (flipped vertically) to (a) by Eq.~\ref{eq:symm-negate-gamma}. (d) \((-75^\circ,-45^\circ,60^\circ)\) is identical to \((-75^\circ,45^\circ,180^\circ - 60^\circ)\) both of which are symmetric (rotated about the \(y\)-axis by \(\pi\)) to (a) by Eqs.~\ref{eq:symm-negate-two} and \ref{eq:symm-alpha-gamma}.}
  \label{fig:combined_symmetries}
\end{figure}

One consequence of these symmetry planes in the protocol space shows that applying \(\pi - \theta\) for any two angles, \(\alpha, \beta,\gamma\), will maintain the same structure of \(E\) up to some rotation or reflection.

Additionally, one can examine \(M^2_\abg\) to see further similarity of structure between \(M_\abg\) and \(M_{\beta,\alpha,\gamma}\) and consequently \(M^{-1}_{\alpha, \beta, \gamma}\),
\begin{align}
  M^2_\abg
  &= (R^y_\gamma \tilde{M}^z_\beta R^{-y}_\gamma \tilde{M}^z_\alpha)^2 \nonumber \\
  &= (S_{xy} R^{-y}_\gamma \tilde{M}^z_\beta R^{y}_\gamma \tilde{M}^z_\alpha S_{xy})^2 \nonumber \\
%  &= S_{xy} R^{-y}_\gamma \tilde{M}^z_\beta (R^{y}_\gamma \tilde{M}^z_\alpha R^{-y}_\gamma \tilde{M}^z_\beta) R^{y}_\gamma \tilde{M}^z_\alpha S_{xy} \nonumber \\
  &= (S_{xy} R^{-y}_\gamma \tilde{M}^z_\beta) M_{\beta,\alpha,\gamma} (R^{y}_\gamma \tilde{M}^z_\alpha S_{xy}) \nonumber \\%\label{eq:secondorder} \\
  &= (S_{xy} R^{-y}_\gamma \tilde{M}^z_\beta) (R^y_\gamma S_{yz}) M^{-1}_\abg (R^y_\gamma S_{yz}) (R^{y}_\gamma \tilde{M}^z_\alpha S_{xy})  \nonumber \\
  &= L_1 M^{-1}_\abg L_2.    \label{eq:secondorder-inverse}
\end{align}
where \(L_1 = (S_{xy} R^{-y}_\gamma \tilde{M}^z_\beta) (R^y_\gamma S_{yz})\) is a single cut-and-shuffle about a reoriented horizontal axis and \(L_2 = (R^y_\gamma S_{yz}) (R^{y}_\gamma \tilde{M}^z_\alpha S_{xy})\) is also a single cut-and-shuffle about a reoriented horizontal axis.

The transformations \(L_1\) and \(L_2\) satisfy
\begin{align}
    L_2 L_1
  	&=  (R^y_\gamma S_{yz}) (R^{y}_\gamma \tilde{M}^z_\alpha S_{xy}) (S_{xy} R^{-y}_\gamma \tilde{M}^z_\beta) (R^y_\gamma S_{yz})  \nonumber \\
  	&= (R^y_\gamma S_{yz}) (R^{y}_\gamma \tilde{M}^z_\alpha R^{-y}_\gamma \tilde{M}^z_\beta) (R^y_\gamma S_{yz}) \nonumber \\ 
  	&= (R^y_\gamma S_{yz}) M_{\beta,\alpha,\gamma} (R^y_\gamma S_{yz})  \nonumber \\ 
  	&=  M^{-1}_\abg, \nonumber
\end{align}
such that when the right hand side of Eq.~\ref{eq:secondorder-inverse} is composed with itself, it yields
\begin{equation}
L_1 M^{-1}_\abg L_2L_1 M^{-1}_\abg L_2 = L_1 M^{-3}_\abg L_2. \nonumber
\end{equation}
Hence, there is a relationship between the forward and reverse mappings,
\begin{equation}
  M^{2n}_\abg = L_1 M^{-2n + 1}_\abg L_2. \label{eq:plus_minus_equiv}
\end{equation}
This relationship reveals that the inverse mapping is simply a cut-and-shuffle of the forward mapping and thus forward iterates of \(M_\abg\) generate the same cutting lines as reverse iterates separated by a single cut-and-shuffle. This can be understood by noting that cell locations are invariant in the forward and reverse mappings, and \(\bar{E}\), a collection of cutting lines and their limit points, is the complement of regular set containing all the periodic cells and therefore also invariant under \(M_\abg\).

Due to the above symmetries in structure, the complete set of unique PWI behaviors of this type exists in a small region between symmetry planes. Based on the periodicity of the rotations, all behaviors can be found in the region \(0 \le \alpha < \pi\), \(0 \le \beta < \pi\), \(0 \le \gamma < 2\pi\). Symmetry can be used to reduce the region of protocol space investigated  to only that containing unique behavior, specifically the region \( 0 \le \alpha \le \pi\), \(0 \le \beta \le \min(\alpha, \pi - \alpha)\), \(0 \le \gamma \le \frac{\pi}{2}\) or the alternative region \( 0 \le \alpha \le \frac{\pi}{2}\), \(0 \le \beta \le \alpha\), \(0 \le \gamma \le \pi\) between symmetry planes (depending on how the rotational plane through \((\alpha, \pi/2,\pi/2)\) from Eq.~\ref{eq:symm-beta-gamma} is chosen). Despite this, much more of the protocol space has been shown in Fig.~\ref{fig:phasespaces} to highlight these symmetries. A diagram showing the planes of symmetry in 3D protocol space is shown in Fig.~\ref{fig:protocol_space}.

\begin{figure}
  \includegraphics[width = 0.45\textwidth]{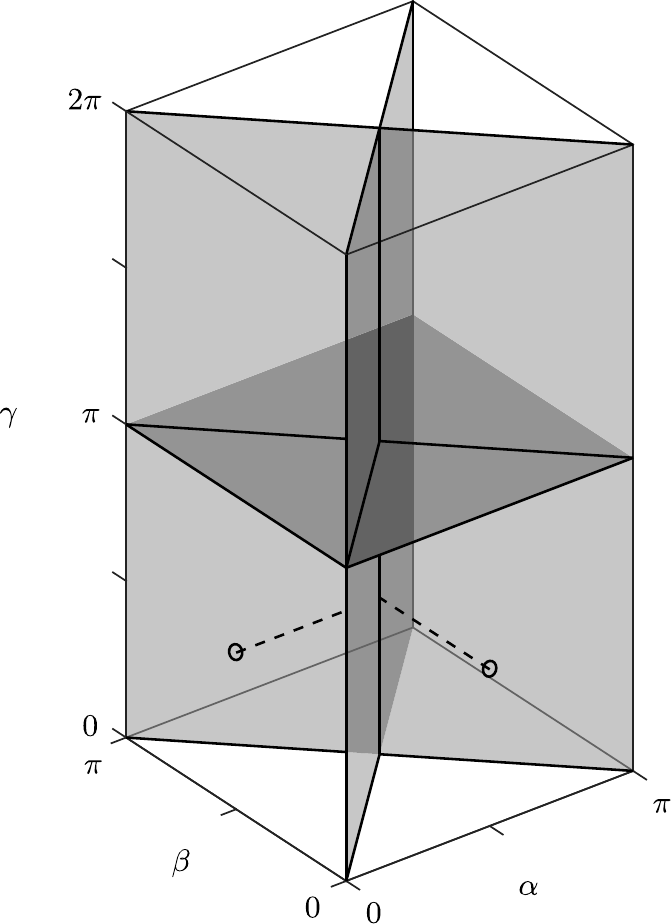}
  \caption{Symmetry planes in the PWI protocol space (bounds of this space are periodic). The plane \(\alpha = \beta\) is due to Eq.~\ref{eq:symm-reverse-time}. The plane \(\alpha = \pi - \beta\) is due to Eq.~\ref{eq:symm-negate-two}. The plane \(\gamma = \pi\) is due to Eq.~\ref{eq:symm-negate-gamma}. The dashed lines \((\alpha,\pi/2,\pi/2)\) and \((\pi/2,\beta,\pi/2)\) are axes about which there is a rotational symmetry due to Eqs.~\ref{eq:symm-beta-gamma} and \ref{eq:symm-alpha-gamma}, respectively.}
  \label{fig:protocol_space}
\end{figure}

These symmetry derivations expand on those found in \cite{Smith2017} and indicate that the reflection symmetries across \(\alpha = \pi/2\) and \(\beta = \pi/2\) are only reflections for \(\gamma = \pi/2\).

\section{Lambert area-preserving azimuthal projection\label{sec:lambert}}
The Lambert azimuthal equal area projection is an area preserving mapping between a sphere and a flat disk \cite{Snyder1987}. For the purposes of this research, it is desirable to have a simple projection between the hemispherical shell (HS) and a planar grid which preserves area to allow easy measurement of computed region sizes. Using a unit sphere, \(S = \{\mathbf{x}\in \mathbb{R}^3 : \|\mathbf{x}\|_2 =1\}\), and a Cartesian coordinate system, \(\mathbf{x} = (x,y,z)\), the projection has a compact form. The mapping from the unit sphere to the plane when centered about the negative \(y\)-axis is given by,
\begin{equation}
  \label{eqn:lamberttoplane}
  (X,Z) = \left(\sqrt{\frac{2}{1-y}}x,\sqrt{\frac{2}{1-y}}z \right),
\end{equation}
and its corresponding inverse projection from the disk to the sphere is given by,
\begin{align}
  \label{eqn:lamberttosphere}
  \begin{pmatrix}
    x \\
    y \\
    z
  \end{pmatrix}
  = 
  \begin{pmatrix}
    \sqrt{1 - \frac{X^2 + Z^2}{4}}X \\
    -1 + \frac{X^2+Z^2}{2} \\
    \sqrt{1 - \frac{X^2 + Z^2}{4}}Z
  \end{pmatrix}.
\end{align}

The hemispherical shell is projected from the sphere to a disk in the \(XZ\)-plane centered at \((0,0)\) with radius \(\sqrt{2}\). For simplicity, a factor of \(\sqrt{2}\) is removed in calculations to produce a unit disk since relative areas on the sphere, not the absolute areas, are used in the computation of \(\Phi\).

%This inverse mapping applied to a Cartesian grid has advantages over the isocube grid \cite{Wan2007} used in previous work \cite{Park2017, Smith2017} due to its compact form, allowing simple area computations. Unlike the isocube grid, which has varying degrees of distortion throughout the domain but is well-behaved around the equator, a projected Cartesian grid is increasingly distorted toward the HS equator, \(\partial S\). Since the equator is always contained in \(E\), the distortion at the equator may be of less importance than it may seem. \footnote{last sentence is vague}

If a regular Cartesian grid is used for the projection, the maximum distance between two points on the HS can be related to the Cartesian grid spacing by taking the limit of the distance between a point on the equator and a point diagonally adjacent in the grid as the grid resolution goes to \(\infty\). Given a grid spacing of \(\frac{2}{N}\), where \(N\) is the resolution of the grid, i.e.\ the number of points in the \(X\) and \(Z\) directions of the Cartesian grid, the maximally distorted geodesic distance due to the projection, which occurs along the \(X=Z\) diagonal at the equator (or the \(X = -Z\) diagonal), is approximately \(\sqrt{12}\frac{2}{N}\) for large resolution \(N\). It is appropriate to take half of this distance, approximately \(3.5/N\), as the value of \(\varepsilon\) that allows cutting lines in the projection to remain continuous. In measurements of \(\Phi\) for a given \(\varepsilon\), the suggested grid size by this maximal distortion argument is much finer than is typically necessary for accurate area measurements of \(\tilde{E}\).

\section{Proof of polygonal cells along resonant branches in \(\alpha = \beta\) symmetry plane} \label{sec:non-ortho_angles}

For orthogonal rotation axes, the resonances along the \(\alpha=\beta\) symmetry line corresponds to polygonal tilings. In particular, for the \(j\)-th resonance, on the tongue extending from \(\alpha=\pi/(j+1)\), the cell with itinerary \(41^j\) is a \((2j+1)\)-gon. Itinerary here is given as a pathway a periodic point takes through the HS, each digit indicating which atom is visited and the superscript indicating repeated visits \cite{Smith2017}. The \((2j+1)\)-gon means that the rotation angle about the center of the cell with itinerary \(41^j\) is \(\theta_{41^j}=2k\pi/(2j+1)\), for some \(k\) coprime to \(2j+1\). Through direct computation of \(\theta_{41^j}\) it can be shown that that \(k=j\).

Varying the angle between the two rotation axes, \(\gamma\), a polygonal tiling can only remain if the cell with itinerary \(41^j\) remains a \((2j+1)\)-gon, i.e.\ the rotation generated by the itinerary \(41^j\) must be invariant under changes in \(\gamma\) and \(\alpha = \beta\). Essentially, we must find the level set such that \(\theta_{41^j}(\alpha,\gamma)=2j\pi/(2j+1)\).

In each atom, the map \(M_{\alpha,\beta,\gamma}\) can be expressed as the composition of two rotations:
\begin{align}
P_1: \, R_1 &= R_{\beta}^{\bm{u}_\gamma} R_{\alpha}^z \\
P_2: \, R_2 &= R_{\beta}^{\bm{u}_\gamma} R_{\alpha+\pi}^z \\
P_3: \, R_3 &= R_{\beta+\pi}^{\bm{u}_\gamma} R_{\alpha}^z \\
P_4: \, R_4 &= R_{\beta+\pi}^{\bm{u}_\gamma} R_{\alpha+\pi}^z,
\end{align}
where \(\bm{u}_\gamma=R_\gamma^y(\hat{\bm{z}})\) is the second rotation axis. Over a full itinerary, the net rotation is the composition of these atomic rotations. For example, for the itinerary \(41^2\) the net rotation is \(R_{41^2} = R_1 R_1 R_4\), noting that the rightmost rotation is performed first. We use the quaternion representation of rotations, i.e.\ rotation by \(\theta\) about the axis \(\bm{u}\) is
\begin{equation}
\bm{q}=\{q_1,q_2,q_3,q_4\}=\{\cos(\theta/2),\sin(\theta/2)\bm{u}\},
\end{equation}
and the composition of rotations is equivalent to products of quaternions. Note that rotation by \(n\theta\) about the axis \(\bm{u}\) is represented by \(\bm{q}^n\), and so
\begin{align}
\bm{q}^n &= \left\lbrace \cos \frac{n\theta}{2}, \sin \frac{n\theta}{2} \bm{u} \right\rbrace \\
 &= \left\lbrace T_n\left( \cos\frac{\theta}{2} \right), \sin\frac{\theta}{2} U_{n-1}\left( \cos\frac{\theta}{2} \right) \bm{u} \right\rbrace  \\
 &= \left\lbrace T_n(q_1), U_{n-1}(q_1) (q_2,q_3,q_4) \right\rbrace, \label{eq:multi-angle_quat}
\end{align}
where \(T_n(x)\) and \(U_n(x)\) are the \(n\)-th Chebyshev polynomials of the first and second kind, respectively. The Chebyshev polynomials are defined by the recurrence relations
\begin{align}
T_{n+1}(x) &= 2x T_n(x) - T_{n-1}(x) \label{eq:Chebyshev_recurrence_1} \\
U_{n+1}(x) &= 2x U_n(x) - U_{n-1}(x), \label{eq:Chebyshev_recurrence_2} 
\end{align}
with \(T_0(x)=U_0(x) = 1\), \(T_1(x)=x\), and \(U_1(x)=2x\), and satisfying
\begin{align}
T_n(\cos(\theta)) &= \cos(n\theta), \\
U_n(\cos(\theta)) &= \frac{\sin((n+1)\theta)}{\sin(\theta)}.
\end{align}

Let \(\bm{Q}^{(1)},\dots,\bm{Q}^{(4)}\) denote the quaternion representations of the atomic rotations \(R_1,\dots,R_4\) and define \(\sss_\theta \equiv \sin\theta/2\) and \(\ccc_\theta \equiv \cos\theta/2\) for brevity. For example,

\begin{align}
\bm{Q}^{(1)} &= \Bigl\lbrace  \ccc_\beta,\sss_\beta \sin\gamma, 0, \sss_\beta \cos\gamma \Bigr\rbrace \Bigl\lbrace \ccc_\beta,0 , 0, \sss_\alpha \Bigr\rbrace    \\
 &= \Bigl\lbrace  \ccc_\alpha \ccc_\beta - \cos\gamma \sss_\alpha \sss_\beta, \ccc_\alpha \sss_\beta \sin\gamma, \nonumber \\
 & \quad\quad\quad  - \sin\gamma \sss_\alpha \sss_\beta, \sss_\alpha \ccc_\beta + \cos\gamma \ccc_\alpha \sss_\beta \Bigr\rbrace. \label{eq:atom_1_quat}
\end{align}

We denote the quaternion representation of the itinerary \(41^j\) by \(\bm{q}^{(j)} = \{q_1^{(j)},q_2^{(j)},q_3^{(j)},q_4^{(j)}\}\), i.e.\ \(\bm{q}^{(j)} = \left(\bm{Q}^{(1)}\right)^j \bm{Q}^{(4)}\). We will show that when \(\alpha=\beta\), there is a relatively simple closed form expression for \(\bm{q}^{(j)}\), for all \(j\geq 1\). 

Substituting \(\beta=\alpha\) into Eq.~\ref{eq:atom_1_quat} yields
\begin{align}
\bm{Q}^{(1)}_1 &= \ccc_\alpha^2 - \cos\gamma \sss_\alpha^2  \nonumber\\
	&= 1 - 2\ccc^2_\gamma \sss_\alpha^2  \nonumber\\
	&= - T_2\left( \ccc_\gamma \sss_\alpha \right) , \\
\bm{Q}^{(1)}_2 &= \ccc_\alpha \sss_\alpha \sin\gamma, \\
\bm{Q}^{(1)}_3 &= - \sss^2_\alpha \sin\gamma, \\
\bm{Q}^{(1)}_4 &= (1+\cos \gamma)\sss_\alpha \ccc_\alpha  \nonumber \\ 
	&= 2 \ccc^2_\gamma \sss_\alpha \ccc_\alpha .
\end{align}
Therefore, from Eq.~\ref{eq:multi-angle_quat}, 
\begin{equation} \label{eq:atom_1_j}
 \left(\bm{Q}^{(1)}\right)^j = \left\lbrace T_j\left(\bm{Q}^{(1)}_1\right), U_{j-1}\left(\bm{Q}^{(1)}_1\right) \left(\bm{Q}^{(1)}_2,\bm{Q}^{(1)}_3,\bm{Q}^{(1)}_4\right) \right\rbrace.
\end{equation}
Noting that the Chebyshev polynomials satisfy the properties
\begin{align}
T_n\left(T_m(x)\right) &= T_{nm}(x), \\
U_n\left(T_m(x)\right) &= \frac{U_{(n+1)m-1}(x)}{U_{m-1}(x)}, \\
T_n(-x) &= (-1)^n T_n(x), \\
U_n(-x) &= (-1)^n U_n(x),
\end{align}  
it follows that
\begin{align}
T_j\left(\bm{Q}^{(1)}_1\right) &= T_j \left( - T_2\left( \ccc_\gamma \sss_\alpha \right) \right)  \nonumber \\
	&= (-1)^j T_{2j}\left( \ccc_\gamma \sss_\alpha \right), \\
U_{j-1}\left(\bm{Q}^{(1)}_1\right) &= U_{j-1}\left( - T_2\left( \ccc_\gamma \sss_\alpha \right) \right)  \nonumber\\
	&= (-1)^{j-1} \frac{U_{2j-1} \left( \ccc_\gamma \sss_\alpha \right)}{U_{1} \left( \ccc_\gamma \sss_\alpha \right)}  \nonumber\\
	&= (-1)^{j-1} \frac{U_{2j-1} \left( \ccc_\gamma \sss_\alpha \right)}{2 \ccc_\gamma \sss_\alpha }.
\end{align}
Substituting these into Eq.~\ref{eq:atom_1_j} and simplifying, it can be shown that
\begin{align}
\left(\bm{Q}^{(1)}\right)^j_1 &= T_j\left(\bm{Q}^{(1)}_1\right) \nonumber \\
	&= (-1)^j T_{2j}\left( \ccc_\gamma \sss_\alpha \right),  \\
\left(\bm{Q}^{(1)}\right)^j_2 &= U_{j-1}\left(\bm{Q}^{(1)}_1\right) \bm{Q}^{(1)}_2  \nonumber \\
	&= (-1)^{j-1} \ccc_\alpha \sss_\gamma U_{2j-1}\left( \ccc_\gamma \sss_\alpha \right)  \\
\left(\bm{Q}^{(1)}\right)^j_3 &= U_{j-1}\left(\bm{Q}^{(1)}_1\right) \bm{Q}^{(1)}_3 \nonumber \\
	&= (-1)^{j} \sss_\alpha \sss_\gamma U_{2j-1}\left( \ccc_\gamma \sss_\alpha \right)  \\
\left(\bm{Q}^{(1)}\right)^j_4 &= U_{j-1}\left(\bm{Q}^{(1)}_1\right) \bm{Q}^{(1)}_4 \nonumber \\
	&= (-1)^{j-1} \ccc_\alpha \ccc_\gamma U_{2j-1}\left( \ccc_\gamma \sss_\alpha \right).
\end{align}

Now, we can directly compute \(\bm{q}^{(j)} = \left(\bm{Q}^{(1)}\right)^j \bm{Q}^{(4)}\), where
\begin{align}
\bm{Q}^{(4)}_1 &= 1-2\ccc^2_\alpha \ccc^2_\gamma, \\%-\ccc^2_\alpha \cos\gamma + \sss^2_\alpha, \\
\bm{Q}^{(4)}_2 &= -2 \sss_\alpha \ccc_\alpha \sss_\gamma \ccc_\gamma, \\%-\frac{1}{2}\sin\alpha \sin\gamma, \\
\bm{Q}^{(4)}_3 &= -2\ccc^2_\alpha \sss_\gamma \ccc_\gamma, \\ %-\ccc^2_\alpha \sin\gamma, \\
\bm{Q}^{(4)}_4 &= -2 \sss_\alpha \ccc_\alpha \ccc^2_\gamma.%-\sin \alpha \ccc^2_\gamma.
\end{align}
Simplifying the product \(\bm{q}^{(j)} = \left(\bm{Q}^{(1)}\right)^j \bm{Q}^{(4)}\), and letting  \(x=\cos(\gamma/2)\sin(\alpha/2)=\ccc_\gamma \sss_\alpha\), it can be shown that
\begin{align}
q_1^{(j)} &= (-1)^j \left[ T_{2(j+1)}(x) - 2(\ccc_\gamma^2 - 1) U_{2j}(x) \right], \label{eq:lem_1_eq_1}  \\
q_2^{(j)} &= (-1)^{j-1}\sss_\gamma \ccc_\alpha \left[ U_{2j+1}(x) + 2U_{2j-1}(x) \right],  \\
q_3^{(j)} &= (-1)^{j-1} \sss_\gamma \left[ 2\ccc_\gamma U_{2j}(x) \right. \nonumber \\
 &\qquad\qquad\qquad \left.-\sss_\alpha\left( U_{2j+1}(x) + 2U_{2j-1}(x) \right) \right], \\
q_4^{(j)} &= (-1)^{j-1} \ccc_\alpha  \ccc_\gamma U_{2j+1}(x). \label{eq:lem_1_eq_4}
\end{align}
%
%\begin{equation}
%q_1^{(j)} = (-1)^j \left[ T_{2j}(x) - 2\ccc_\gamma^2 \ccc_\alpha^2 U_{2j}(x) \right]
%\end{equation}
%
Deriving Eqs.~\ref{eq:lem_1_eq_1}--\ref{eq:lem_1_eq_4} involves a lot of bookkeeping, but each step is not difficult. It is a matter of repeatedly exploiting standard trigonometric identities (e.g.\ double angle formulae), the recursive definitions of the Chebyshev polynomials (Eqs.~\ref{eq:Chebyshev_recurrence_1} and \ref{eq:Chebyshev_recurrence_2}), and the relation
\begin{equation}
U_n(x) = x U_{n-1}(x) + T_n(x),
\end{equation}
between Chebyshev polynomials of the first and second kind.

Using Eq.~\ref{eq:lem_1_eq_1}, the internal rotation angle, \(\theta_{41^j}\), generated by the cell with itinerary \(41^j\) can be calculated as
\begin{equation}
\theta_{41^j} = 2\arccos q_1^{(j)}.
\end{equation}

\begin{proposition} \label{prop:1}
For all \(j\geq 1\), if 
\begin{equation} \label{eq:non-ortho_resonances}
\ccc_\gamma \sss_\alpha=\sin\left(\frac{\pi}{2(2j+1)}\right)=\cos\left(\frac{j\pi}{2j+1}\right),
\end{equation}
then 
\begin{equation}
\theta_{41^j}(\alpha,\gamma)=\frac{2j\pi}{2j+1}.
\end{equation}
\end{proposition}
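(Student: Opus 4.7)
The plan is to substitute the hypothesis into Eq.~\ref{eq:lem_1_eq_1} and evaluate $q_1^{(j)}$ directly. Recall that $\theta_{41^j}/2 = \arccos q_1^{(j)}$, so the goal is equivalent to showing $q_1^{(j)} = \cos(j\pi/(2j+1))$ under the hypothesis. With $x = \ccc_\gamma \sss_\alpha$, the key observation is that the hypothesis can be rewritten as $x = \cos(j\pi/(2j+1))$, i.e.\ $x$ is precisely a zero of $U_{2j}$ (the Chebyshev polynomial of the second kind of degree $2j$), because $U_{2j}(\cos\phi) = \sin((2j+1)\phi)/\sin\phi$ vanishes at $\phi = k\pi/(2j+1)$ for $k = 1,\dots,2j$.

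First I would observe that at $x = \cos(j\pi/(2j+1))$, setting $\phi = j\pi/(2j+1)$, we have $U_{2j}(x) = \sin(j\pi)/\sin\phi = 0$. Therefore the second term in Eq.~\ref{eq:lem_1_eq_1} drops out and
\begin{equation}
q_1^{(j)} = (-1)^j T_{2(j+1)}(x) = (-1)^j \cos\bigl(2(j+1)\phi\bigr).
\end{equation}
Next I would simplify the argument of the cosine: writing $2(j+1)\phi = 2j(j+1)\pi/(2j+1)$ and using $2j(j+1) = j(2j+1) + j$, we get
\begin{equation}
2(j+1)\phi = j\pi + \frac{j\pi}{2j+1},
\end{equation}
so $\cos(2(j+1)\phi) = (-1)^j \cos(j\pi/(2j+1))$. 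Combining the two factors of $(-1)^j$ yields $q_1^{(j)} = \cos(j\pi/(2j+1))$, hence $\theta_{41^j} = 2\arccos q_1^{(j)} = 2j\pi/(2j+1)$, as claimed.

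The main obstacle is really already absorbed into the preceding derivation of Eqs.~\ref{eq:lem_1_eq_1}--\ref{eq:lem_1_eq_4}: once the closed form for $q_1^{(j)}$ as a combination of Chebyshev polynomials in the single variable $x = \ccc_\gamma \sss_\alpha$ is in hand, the proposition reduces to the elementary fact that the hypothesis places $x$ at a common special value where $U_{2j}$ vanishes and $T_{2(j+1)}$ is easily evaluated. A small sanity check worth flagging is that one should verify $\gcd(j, 2j+1) = 1$, which follows because any common divisor of $j$ and $2j+1$ divides $(2j+1) - 2j = 1$; this ensures that $j\pi/(2j+1)$ is in lowest terms so that the rotation genuinely produces a regular $(2j+1)$-gon rather than a degenerate smaller polygon, matching the geometric interpretation in the surrounding text.
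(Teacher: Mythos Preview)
Your proof is correct and follows essentially the same route as the paper: substitute $x=\cos(j\pi/(2j+1))$ into Eq.~\ref{eq:lem_1_eq_1}, observe that the $U_{2j}$ term vanishes because $\sin(j\pi)=0$, and then reduce $(-1)^j T_{2(j+1)}(x)$ via $2(j+1)\phi = j\pi + j\pi/(2j+1)$ to obtain $q_1^{(j)}=\cos(j\pi/(2j+1))$. Your added remark on $\gcd(j,2j+1)=1$ is a nice sanity check not made explicit in the paper, but otherwise the arguments coincide.
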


\begin{proof}
The aim is to show that if \(\ccc_\gamma \sss_\alpha = \cos(j\pi/(2j+1))\), then 
\begin{equation} \label{eq:target}
q_1^{(j)} = \cos(j\pi/(2j+1)),
\end{equation}
so that \(\theta_{41^j}=2j\pi/(2j+1)\).

Recall that \(T_n(x)\) and \(U_n(x)\) satisfy
\begin{align}
T_n(\cos(\theta)) &= \cos(n\theta), \\
U_n(\cos(\theta)) &= \frac{\sin((n+1)\theta)}{\sin(\theta)}.
\end{align}
Hence, substituting \(x=\ccc_\gamma \sss_\alpha = \cos(j\pi/(2j+1))\) into Eq.~\ref{eq:lem_1_eq_1} yields
\begin{widetext}
\begin{align}
q_1^{(j)} &= (-1)^j\left[ T_{2(j+1)}(\cos(j\pi/(2j+1))) - 2(\ccc_\gamma^2 - 1) U_{2j}(\cos(j\pi/(2j+1))) \right] \nonumber \\
 &= (-1)^j\left[ \cos\left( \frac{2j(j+1)\pi}{2j+1} \right) - 2(\ccc_\gamma^2 - 1) \frac{\sin(j\pi)}{\sin(j\pi/(2j+1))} \right] \nonumber \\
 &= (-1)^j\left[ \cos\left( j\pi + \frac{j\pi}{2j+1} \right) + 0 \right]  \nonumber\\
 &= (-1)^j\left[ (-1)^j \cos\left( \frac{j\pi}{2j+1} \right) \right]  \nonumber\\
 &= \cos\left( \frac{j\pi}{2j+1} \right),
\end{align}
\end{widetext}
as required.

\end{proof}

For example, for \(j=1\), when \(x=\ccc_\gamma \sss_\alpha = \cos(\pi/3)\),
\begin{align}
q_1^{(1)} &= -T_4(\cos(\pi/3)) + 2(\ccc_\gamma^2-1) U_2(\cos(\pi/3))   \nonumber\\ 
 &= -\cos(4\pi/3) + 2(\ccc_\gamma^2-1) \frac{\sin(\pi)}{\sin(\pi/3)}   \nonumber\\ 
 &=  \cos(\pi/3) + 0, \nonumber
\end{align}
and so \(\theta_{41}=2\arccos(q_1^{(1)}) = 2\pi/3\). This means the cell with itinerary \(41\) is a triangle.

Note that the above curves are not the only curves where \(\theta_{41^j}(\alpha,\gamma)=2j\pi/(2j+1)\). For example, with \(j = 2\), there are other protocols such that the \(41^2\) cell is pentagonal. However, the other curves do not result in polygonal tilings, or resonances. That is, the existence of some polygonal cells is a necessary, but not sufficient, condition for a complete polygonal tiling. 

It is also worth noting that when \(\gamma=\pi/2\), Eq.~\ref{eq:non-ortho_resonances} gives the angles \(\alpha\) corresponding to the resonances analytically, and in a simple, closed, explicit form. Previously, Smith et al.\ \cite{Smith2017} had only found an implicit equation for these resonances.

%\bibliography{/../Reference_papers/lynn-non-orthogonal_axes.bib}% Produces the bibliography via BibTeX.
%\bibliography{lynn-non-orthogonal_axes}% Produces the bibliography via BibTeX.

%merlin.mbs apsrev4-1.bst 2010-07-25 4.21a (PWD, AO, DPC) hacked
%Control: key (0)
%Control: author (8) initials jnrlst
%Control: editor formatted (1) identically to author
%Control: production of article title (-1) disabled
%Control: page (0) single
%Control: year (1) truncated
%Control: production of eprint (0) enabled
%

\end{document}